\newtheorem{theorem}{Theorem}
\newtheorem{lemma}{Lemma}
\newtheorem{definition}{Definition}
\newtheorem{proof}{Proof}
\title{Sufficient D-Stability Conditions for Non-Square Matrices}
\author{Yuhao Tong$^1$, Steven W. Su $^{1,2}$ $^*$
\AND
\thanks{$^{*}$ The co-responding author.}
\thanks{$^{1}$ College of Artificial Intelligence and Big data for Medical Sciences, Shandong First Medical University
Shandong Academy of Medical Sciences, P. R. China.}
\thanks{$^{2}$ University of Technology, Sydney, Australia.}
}
\begin{document}

\maketitle

\begin{abstract}
This note explores the extension of D-stability to non-square matrices, applicable to distributed/decentralized controllability analysis. We first present a definition of D-stability for non-square matrices, directly extending from square matrices. We propose sufficient conditions for specific configurations of non-square matrices. Finally, we consider the selection of configurations to ensure the D-stability of a given non-square system. 
\end{abstract}

\section{INTRODUCTION}

In various applications, the stability of systems under different structural variations and constraints is a major concern in relevant research areas. For instance, in the linearization of diffusion models in biological systems at constant equilibrium, strongly stable matrices are proposed \cite{CROSS1978253, hadeler2006nonlinear}. The field of system control and its applications, including decentralized stability \cite{sun2023gallery, Wang:-On, hadeler2006nonlinear, johnson1974sufficient, goh1977global, iggidr2023limits, barker1978positive}, has motivated research into the stability of various matrices, such as P-matrices, D-stable matrices, and Volterra-Lyapunov stable matrices. These matrices are also significant in other areas, like economics.

In the process control industry, decentralized or distributed control is often preferred over centralized control due to its simplicity and fault tolerance. Consequently, the configuration of decentralized control systems has been extensively investigated. Topics such as Decentralized Unconditional Stability and Decentralized Controllability have been well-explored \cite{anderson1981algebraic, Anderson:-Alge, Sko:-ariab, grosdidier1986interaction, Su:-Analy}.

Particularly in applications involving non-square systems, addressing the stability of certain special real matrices associated with these systems is essential \cite{zhang2017multiloop, WHEATON20171, zhiteckii2022robust, sujatha2022control, steentjes2022data}. This necessity has led to the extension of stability concepts to non-square matrices. This note focuses exclusively on real matrices, with the potential for extending to complex matrices via frequency-based techniques.

This study primarily focuses on the conditions to ensure the extended D-Stability for non-square matrices. The major contribution is the proof of a mild sufficient D-Stability condition for non-square matrices. We also propose an open problem for investigating the necessary and sufficient conditions of D-Stability for non-square matrices.

\section{DECENTRALIZED STABILIZATION FOR NON-SQUARE SYSTEMS AND MATRICES}

One of the motivations for extending decentralized stability results from square to non-square matrices stems from exploring the decentralized unconditional stability (DUS) of non-square processes \cite{Sko:-ariab, grosdidier1986interaction, Su:-Analy}. A key DUS condition mirrors the D-stability requirement for the system's steady state gain matrix, typically a non-square real matrix in non-square processes. Here, we present a D-stability-like sufficient condition for non-square matrices to achieve decentralized unconditional stability. While this note primarily focuses on the investigation of real non-square matrices, we will not provide details on applying this condition to ensure DUS in non-square processes. Instead, our focus is on linking DUS conditions of the steady state transfer functions between non-square systems and their corresponding square systems. For readers interested in the proof of this sufficient DUS condition, we recommend consulting existing literature that discusses the proof of DUS conditions for square systems using singular perturbation analysis \cite{Kokotovic:-Sing}.

We first present the definition of D-stability for a non-square matrix as follows.

\begin{definition} \label{Dfn1}
	For a non-square matrix $A \in \mathbb{R}^{m \times n}$ with $n > m$, we define $A$ as D-stable if there exists a block diagonal non-square matrix $K \in \mathbb{R}^{n \times m}$ such that for all non-negative diagonal matrices $E \in  \mathbb{R}^{n \times n}$, and $j \in \mathcal{M}$ \cite{campo1994achievable},
	\begin{equation}\label{suf_con}
		\text{Re}\{ \sigma_i ( [A E K]_j ) \} >0,
	\end{equation}
	where $\mathcal{M}$ is the index set consisting of $k$ tuples of integers in the range $1,\cdots, m$, and $\text{Re} \{ \sigma_i (M) \}$ denotes the real part of the $i$-th eigenvalue of matrix $M$.
\end{definition}

Definition \ref{Dfn1} describes the characteristic of a non-square D-stable-like matrix. Based on this extension, we now investigate the D-stable conditions between non-square and square matrices. To formalize our discussion, we define the structure of the $K$ matrix. According to this structure, we will define the square matrices for the non-square matrices $A$ and $E$.

First, we assume the structure of the block diagonal $K$ matrix as follows:

\begin{equation}\label{integral_matrix}
K=
\begin{bmatrix}
    k_{1,1} & \cdots & k_{1, p_1} & 0 & \cdots & 0 & \cdots & 0 & \cdots & 0 \\
    0 & \cdots & 0 & k_{2, 1} & \cdots & k_{2, p_2} & \cdots & 0 & \cdots & 0\\
    \vdots & \vdots & \ddots & \vdots & \vdots & \ddots & \vdots & \vdots & \ddots & \vdots \\
    0 & \cdots & 0 & 0 & \dots & 0 & \cdots & k_{m, 1} & \cdots & k_{m, p_m}
\end{bmatrix}^T
\end{equation}
where $\sum_{i=1}^{m} p_i = n$. To simplify our discussion, we assume all the elements $k_{i,j}$ are non-negative.

Considering the non-negative diagonal matrix $ E= diag\{ [\varepsilon _{1,1}, \cdots,
  \varepsilon_{m,p_m}] \} \,\,\, \,\, $ with $\varepsilon _{i,j} \ge 0 $, we have
\begin{equation}\label{integral_matrix_E}
\begin{aligned}
\bar K& = E K=\\
&\begin{bmatrix}
    k_{1,1} \varepsilon _{1,1} & \cdots & k_{1, p_1} \varepsilon _{1, p_1} & 0 & \cdots & 0 & \cdots & 0 & \cdots & 0 \\
    0 & \cdots & 0 & k_{2, 1} \varepsilon _{2, 1} & \cdots & k_{2, p_2} \varepsilon _{2, p_2} & \cdots & 0 & \cdots & 0\\
    \vdots & \vdots & \ddots & \vdots & \vdots & \ddots & \vdots & \vdots & \ddots & \vdots \\
    0 & \cdots & 0 & 0 & \dots & 0 & \cdots & k_{m, 1} \varepsilon _{m,1} & \cdots & k_{m, p_m} \varepsilon _{m, p_m}
\end{bmatrix}^T
\end{aligned}
\end{equation}

According to the structure of $K$, we redefine the sub-indices of the $n$ columns of $A$ as follows:
$$A = [\boldsymbol{a}_{1,1}, \cdots, \boldsymbol{a}_{1,p_1}, \boldsymbol{a}_{2,1}, \cdots, \boldsymbol{a}_{2,p_2}, \cdots, \cdots, \boldsymbol{a}_{m,1}, \cdots, \boldsymbol{a}_{m,p_m}],$$
then $AEK$ as shown in (\ref{suf_con}) can be expressed as
\begin{equation}\label{eq_li_comb}
	AEK = \left[ \sum_{i=1}^{p_1} \varepsilon_{1,i} k_{1,i} \boldsymbol{a}_{1,i}, \sum_{i=1}^{p_2} \varepsilon_{2, i} k_{2,i} \boldsymbol{a}_{2,i}, \cdots, \sum_{i=1}^{p_m} \varepsilon_{m,i} k_{m,i} \boldsymbol{a}_{m,i} \right].
\end{equation}

Now, based on the structure in (\ref{integral_matrix}), we define the Squared Matrices for the matrices $A$ and $K$ as follows.





\begin{definition} (Squared Matrices) \label{squaredmatrix}
Consider a non-square matrix $A \in \mathbb{R}^{m \times n}$ ($n > m$) and its associated block diagonal matrix $K \in \mathbb{R}^{n \times m}$, as defined in Equation (\ref{integral_matrix}). We describe a procedure to construct squared matrices from $A$ and $K$ regarding the values of $\varepsilon_{i,j}$.

First, for the case where the dimension of the squared matrix is $m$, i.e., $\forall i \in \{1, 2, \cdots, m\}$, we select only one column from each $\boldsymbol{a}_{i, j}$, where $j \in \{1, 2, \cdots, p_i\}$. This corresponds to the scenario where, for a given $i$, exactly one of the corresponding $\varepsilon_{i,j} \ne 0$, $j \in \{1, 2, \cdots, p_i\}$. We can construct $N = \prod_{i=1}^{m} p_i$ such squared matrices for both $A$ and $K$, denoted as $[A]^m_{s_l}$ and $[K]^m_{s_l}$, respectively, for $l \in \{1, 2, \cdots, N\}$.

Second, when the dimension of the squared matrices is less than $m$, corresponding to the scenario where, for one or more $i \in \{1, 2, \cdots, m\}$, $\varepsilon_{i,j} = 0$ for all $j \in \{1, 2, \cdots, p_i\}$, we can construct squared matrices of dimension $k < m$. These are denoted as $[A]^k_{s_l}$ and $[K]^k_{s_l}$.

These squared matrices are referred to as the square counterparts of the non-square matrices $A$ and $K$.
\end{definition}

In Definition \ref{squaredmatrix},  regarding the original decentralized stabilization problem, the reduction in the dimension of matrices occurs in cases where all the control inputs for certain columns have zero gain. In other words, for some index $i$, if $\varepsilon_{i,j} = 0$ for all $j \in {1, 2, \cdots, p_i}$, it corresponds to the removal of a control input from service, as discussed in \cite{campo1994achievable}.

Based on the definition of the squared matrices (Definition \ref{squaredmatrix}), we present the connection between square and non-square matrices.

First, we introduce two well-known stable square matrices.

\begin{definition} \cite{Cross:-Thre}
The matrix $A=(a_{ij}) \in \mathbb{R}^{n \times n}$ is said to be

(1) D-stable if $A D$ is stable for all diagonal matrix $D>0$;

(2) Volterra-Lyapunov stable if there exists a diagonal matrix $D >0$ for which $AD + DA^T > 0$.
\end{definition}

\begin{lemma} (\cite{Cross:-Thre}) \label{lemma2}
If a matrix $A=(a_{ij}) \in \mathbb{R}^{n \times n}$ is Volterra-Lyapunov stable, then, for any diagonal positive matrix $D > 0$ and $i \in \mathcal{M}$, where $\mathcal{M}$ is the index set consisting of $k$ tuples of integers in the
range $1,\cdots, m$,
$$Re\{ \sigma [AD]_i \} >0,$$
which implies that $[A]_i$, for $i \in \mathcal{M}$,  are all D-stable matrices.
\end{lemma}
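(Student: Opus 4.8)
The plan is to establish the result in two stages: first reduce the statement about arbitrary principal submatrices $[A]_i$ to a statement about Volterra--Lyapunov stability of those submatrices, and then show that Volterra--Lyapunov stability of a square matrix implies its D-stability in the positive-real-part sense used throughout the paper.

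For the first stage, I would start from the defining witness of Volterra--Lyapunov stability: there is a positive diagonal matrix $D_0 > 0$ with $A D_0 + D_0 A^{T} > 0$. The key observation is that because $D_0$ is diagonal, restricting to the index tuple $i \in \mathcal{M}$ gives $[A D_0]_i = [A]_i [D_0]_i$ and likewise $[D_0 A^{T}]_i = [D_0]_i [A]_i^{T}$; since every principal submatrix of a positive definite matrix is positive definite, $[A]_i [D_0]_i + [D_0]_i [A]_i^{T} > 0$ with $[D_0]_i > 0$ diagonal. Hence each $[A]_i$ is itself Volterra--Lyapunov stable. The same diagonal-commutation fact also shows $[AD]_i = [A]_i [D]_i$, and as $D$ ranges over all positive diagonal matrices so does $[D]_i$; so the conclusion $\mathrm{Re}\{\sigma([AD]_i)\} > 0$ for all $D > 0$ is exactly the statement that $[A]_i$ is D-stable.

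For the second stage, fix a Volterra--Lyapunov stable square matrix $B$ with witness $D_B > 0$ diagonal, and let $D > 0$ be an arbitrary positive diagonal matrix. Write $G = B D_B$, so $G + G^{T} > 0$, and set $\Lambda = D_B^{-1} D > 0$ (diagonal), so that $BD = G\Lambda$. Since $G\Lambda$ is similar to $\Lambda^{1/2} G \Lambda^{1/2}$, and $\Lambda^{1/2} G \Lambda^{1/2} + (\Lambda^{1/2} G \Lambda^{1/2})^{T} = \Lambda^{1/2}(G + G^{T})\Lambda^{1/2} > 0$, the matrix $BD$ has the same spectrum as a matrix whose symmetric part is positive definite. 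A one-line Rayleigh-quotient argument --- for an eigenpair $Mv = \lambda v$ with $v \ne 0$ one has $\mathrm{Re}(\lambda)\, v^{*}v = \mathrm{Re}(v^{*}Mv) = v^{*}\tfrac{1}{2}(M + M^{T})v > 0$ --- then yields $\mathrm{Re}\{\sigma(BD)\} > 0$. Applying this with $B = [A]_i$ completes the proof.

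I expect the only genuinely delicate point to be the bookkeeping in the first stage: verifying carefully that taking a principal submatrix commutes with left/right multiplication by a diagonal matrix, which is precisely what makes Volterra--Lyapunov stability (and hence D-stability) hereditary along principal submatrices. The spectral argument in the second stage is routine once the similarity to a matrix with positive definite symmetric part is in place. I would also state explicitly at the outset that ``stable'' here means all eigenvalues have strictly positive real part, to keep the sign conventions consistent with Definition~\ref{Dfn1}.
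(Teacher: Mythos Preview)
Your argument is correct. The two-stage reduction --- first showing Volterra--Lyapunov stability is inherited by principal submatrices via the diagonal witness, then showing Volterra--Lyapunov stability implies D-stability through the similarity $BD \sim \Lambda^{1/2}G\Lambda^{1/2}$ and a Rayleigh-quotient bound --- is a clean and standard route.

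Note, however, that the paper does not give its own proof of this lemma at all: it is quoted directly from \cite{Cross:-Thre} and used as a known result. So there is no ``paper's approach'' to compare against; you have supplied a self-contained proof where the authors simply invoke the literature. Your write-up would serve well as an appendix-style justification if one wanted the paper to be more self-contained, but as it stands the comparison task is vacuous.
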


\begin{definition} \label{def_NSQ_lyp}
	For a real nonsquare matrix $A \in \mathbb{R}^{m \times n}$, if one can find a diagonal positive matrix  $D > 0$, so that all its $m$-order squared matrices (see Definition \ref{squaredmatrix}) $[A]^m_{s_i} \in \mathbb{R}^{m \times m}$  satisfy the following LMIs,
	$$[A]^m_{s_i} D + D ([A]^m_{s_i})^T  > 0, $$
	then, we call the non-square matrix $A$ a simultaneous Volterra-Lyapunov stable matrix.
\end{definition}

\begin{definition} \label{def_NSQ_lyp_ind}
For a real nonsquare matrix $A \in \mathbb{R}^{m \times n}$, if one can find an individual diagonal positive matrix  $D_i > 0$, so that all its $m$-order squared matrices (see Definition \ref{squaredmatrix}) $[A]^m_{s_i} \in \mathbb{R}^{m \times m}$  satisfy the following LMIs,
$$[A]^m_{s_i} D_i + D_i ([A]^m_{s_i})^T  > 0, $$
then, we call the non-square matrix $A$ an individual Volterra-Lyapunov stable matrix.
\end{definition}


\begin{lemma} \label{simul_lemma22a}
	If a real nonsquare matrix $A \in \mathbb{R}^{m \times n}$ is an individual Volterra-Lyapunov stable matrix (see Definition \ref{def_NSQ_lyp_ind}),
	then, there exists a
	block diagonal non-square matrix $K \in \mathbb{C}^{n \times m}$ such that for all non-negative diagonal matrices $E \in  \mathbb{R}^{n \times n}$, and $j \in \mathcal{M}$, where $\mathcal{M}$ is the index set consisting of $k$ tuples of integers in the
	range $1,\cdots, m$,
	\begin{equation}\label{suf_con_1}
		Re\{ \sigma_i ( [A E K]_j ) \} >0,
	\end{equation}
	where $\text{Re} \{ \sigma_i (M) \}$ represents the real part of the $i$-th eigenvalue of matrix $M$.
\end{lemma}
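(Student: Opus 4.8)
The plan is to certify the non-square condition (\ref{suf_con_1}) by choosing an explicit $K$, rewriting $AEK$ through the column decomposition (\ref{eq_li_comb}), and then handing the resulting square matrices to the Volterra--Lyapunov machinery of Lemma \ref{lemma2}.

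First I would take the block-diagonal $K$ with all gains equal to one, so that by (\ref{eq_li_comb}) the $j$-th column of $AEK$ is $\sum_{i=1}^{p_j}\varepsilon_{j,i}\boldsymbol{a}_{j,i}$ for any non-negative diagonal $E=\mathrm{diag}\{\varepsilon_{1,1},\dots,\varepsilon_{m,p_m}\}$. Setting $s_j=\sum_i\varepsilon_{j,i}$ and, on the blocks with $s_j>0$, $q_{j,i}=\varepsilon_{j,i}/s_j$, the $j$-th column equals $s_j\sum_i q_{j,i}\boldsymbol{a}_{j,i}$, a non-negatively scaled convex combination of the $j$-th block of columns of $A$. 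Hence $AEK=B\,\mathrm{diag}\{s_1,\dots,s_m\}$, where the $j$-th column of $B$ is $\sum_i q_{j,i}\boldsymbol{a}_{j,i}$.

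The key algebraic step is that $B$ is a convex combination of the $m$-order squared matrices of Definition \ref{squaredmatrix}. Indexing $[A]^m_{s_l}$ by a selection $(i_1(l),\dots,i_m(l))$ with $i_j(l)\in\{1,\dots,p_j\}$, a direct expansion (grouping terms by the value of $i_j(l)$) gives $B=\sum_{l=1}^{N}\lambda_l\,[A]^m_{s_l}$ with $\lambda_l=\prod_{j=1}^m q_{j,i_j(l)}\ge 0$ and $\sum_l\lambda_l=\prod_j\big(\sum_i q_{j,i}\big)=1$. Now I would invoke the hypothesis: in the simultaneous case (a common $D>0$), linearity of $M\mapsto MD+DM^{T}$ yields $BD+DB^{T}=\sum_l\lambda_l\big([A]^m_{s_l}D+D([A]^m_{s_l})^{T}\big)>0$, so $B$ is Volterra--Lyapunov stable; under only the individual hypothesis one instead observes that as $E$ ranges over all non-negative diagonals the realized matrices $B$ sweep out exactly the set of matrices whose $j$-th column lies in $\mathrm{conv}\{\boldsymbol{a}_{j,i}\}_{i}$ — a product of simplices whose extreme points are the $[A]^m_{s_l}$ — and one must argue this entire set consists of D-stable matrices. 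In the simultaneous case Lemma \ref{lemma2} then gives that every principal submatrix $[B]_j$, $j\in\mathcal M$, is D-stable; since $\mathrm{diag}\{s_1,\dots,s_m\}$ is diagonal, restriction commutes with it, $[AEK]_j=[B]_j\,[\mathrm{diag}\{s\}]_j$, and D-stability of $[B]_j$ forces $\text{Re}\{\sigma_i([AEK]_j)\}>0$ on every index tuple whose corresponding $s$'s are positive. The degenerate situations, where $s_j=0$ for some blocks (a control input withdrawn from service, as in Definition \ref{squaredmatrix}), reduce to the lower-order squared matrices $[A]^k_{s_l}$ with $k<m$; these are principal submatrices of the $m$-order ones, so Lemma \ref{lemma2} again supplies their D-stability.

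The main obstacle is precisely the passage from the simultaneous to the individual Volterra--Lyapunov hypothesis: a convex combination of matrices that are individually V--L stable with different $D_l$ need not even be a stable matrix, so the clean linearity argument only closes the proof under a common $D$. Establishing the conclusion from the individual hypothesis alone must exploit the extra structure that the squared matrices share columns — so the realized $B$'s form a product of segments rather than an arbitrary polytope — together with the freedom, highlighted by the statement allowing $K\in\mathbb{C}^{n\times m}$, to pick the gains (possibly complex) so as to confine $AEK$ to a region where D-stability is certifiable; verifying that this suffices is the crux of the argument.
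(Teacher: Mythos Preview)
Your treatment of the simultaneous case is clean and correct: writing $B=\sum_l \lambda_l [A]^m_{s_l}$ with product weights $\lambda_l=\prod_j q_{j,i_j(l)}$ and pushing a common $D$ through by linearity is exactly right there. You also correctly locate the obstruction under the individual hypothesis---a convex combination of matrices that are V--L stable with \emph{different} $D_l$ need not be stable---but the resolution you gesture at (complex gains, product-of-simplices geometry) is not the one the paper uses, and as stated your argument does not close.

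The paper's idea is to change what is being combined. Rather than expressing $AEK$ as a positive combination of the squared matrices $[A]^m_{s_l}$, it expresses $AEK$ as a positive combination of the products $[A]^m_{s_l}D_l$, with the individual Lyapunov diagonals already absorbed into the summands. Since each $[A]^m_{s_l}D_l$ has positive-definite symmetric part by hypothesis, $M:=\sum_l \gamma_l\,[A]^m_{s_l}D_l$ satisfies $M+M^{T}>0$ for every choice of positive weights $\gamma_l$; this makes $M$ Volterra--Lyapunov stable with the identity, so Lemma~\ref{lemma2} yields $\mathrm{Re}\{\sigma([MD]_j)\}>0$ for every positive diagonal $D$ and every $j\in\mathcal M$. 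What remains is to show that for each non-negative diagonal $E$ one can choose positive $\gamma_l$'s (and then a positive diagonal $D$ absorbing the per-column scale $\tilde k_{j,1}$) so that $MD=AEK$. Because the diagonal entries of the $D_l$'s are fixed positive constants, matching the column-wise ratios among the coefficients of $\boldsymbol{a}_{j,1},\dots,\boldsymbol{a}_{j,p_j}$ is a system of $n-m$ ratio constraints in $N=\prod_j p_j$ positive unknowns with fixed positive multipliers; its solvability for every prescribed ratio pattern is exactly the content of the combinatorial Lemma~\ref{pailiezuhe_Lemma}. So the missing ingredient in your approach is not an exotic choice of $K$, but folding each $D_l$ into its summand before forming the combination, and then invoking Lemma~\ref{pailiezuhe_Lemma} to realize $AEK$ in that form.
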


%


To prove Lemma \ref{simul_lemma22a}, we first introduce Lemma \ref{pailiezuhe_Lemma}, which will aid in the proof. For clarity and generality, we employ the scenario of gambling as an illustrative example. This example will later be shown to be directly relevant to the proof of Lemma \ref{simul_lemma22a}.

\begin{lemma} \label{pailiezuhe_Lemma}
Consider $m$ groups of cards. Let $\phi$ denote the current group index with $0 < \phi \leq m$, where each group contains $p_\phi$ distinct cards. A player can place a bet on each combination of \(m\) cards, selecting one card from each group. Let \(\kappa_{\phi}\) be the card chosen from the \(\phi\)-th group. The payoff for a specific combination is denoted by $\gamma_{\kappa_1, \kappa_2, \ldots, \kappa_m} \ge 0$. The payoff for each card in any combination can be calculated by the product of \(\gamma\) and a positive proportionality factor \(\lambda\), which varies both with the combination and within a combination among different cards. Consequently, the payoff for each card differs across combinations and also varies within a single combination due to differences in \(\lambda\). The total payoff for each card is the sum of its payoffs from all combinations. Suppose the player desires a specific ratio \(k_j^\phi > 0\) between the payoff of the \((j+1)\)-th card of group \(\phi\) and the first card of that group.

Then, it is always possible to find a set of positive values for $\gamma_{\kappa_1, \kappa_2, \ldots, \kappa_m} \ge 0$ such that the desired ratios $k_j^\phi$ are achieved for each set.

\end{lemma}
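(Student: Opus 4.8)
The plan is to solve for the stakes constructively, using a product ansatz closed up by Brouwer's fixed-point theorem. First I would fix notation: index a bet by the tuple $l=(\kappa_1,\dots,\kappa_m)$ with $\kappa_\phi\in\{1,\dots,p_\phi\}$, so there are $N=\prod_{\phi}p_\phi$ bets; write $\lambda_{l,\phi}>0$ for the given proportionality factor attached to the group-$\phi$ card inside bet $l$; and write $P^\phi_i=\sum_{l:\,\kappa_\phi(l)=i}\gamma_l\,\lambda_{l,\phi}$ for the total payoff accumulated by the $i$-th card of group $\phi$. Setting $\rho^\phi_1=1$ and $\rho^\phi_{j+1}=k^\phi_j$ for $j=1,\dots,p_\phi-1$, the goal is precisely that $(P^\phi_1,\dots,P^\phi_{p_\phi})$ be a positive multiple of $(\rho^\phi_1,\dots,\rho^\phi_{p_\phi})$ for every $\phi$, i.e. $P^\phi_i/P^\phi_1=\rho^\phi_i$.

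Next I would restrict attention to stakes of product form $\gamma_{(\kappa_1,\dots,\kappa_m)}=\prod_{\phi=1}^{m}\theta^\phi_{\kappa_\phi}$ with $\theta^\phi_i>0$ and the normalization $\theta^\phi_1=1$, so the genuinely free unknowns are the $n-m$ numbers $\theta^\phi_i$, $i\ge 2$. A direct computation gives $P^\phi_i=\theta^\phi_i\,Q^\phi_i(\theta^{-\phi})$, where $Q^\phi_i(\theta^{-\phi})=\sum_{\vec\kappa_{-\phi}}\Big(\prod_{\psi\ne\phi}\theta^\psi_{\kappa_\psi}\Big)\lambda_{(\vec\kappa_{-\phi};\,\kappa_\phi=i),\,\phi}>0$ collects all bets whose group-$\phi$ card equals $i$ and depends only on the stakes of the other groups (for $m=1$ it is simply the constant $\lambda_{(i),\phi}$). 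The ratio requirements then become the fixed-point system $\theta^\phi_i=\rho^\phi_i\,Q^\phi_1(\theta^{-\phi})/Q^\phi_i(\theta^{-\phi})$ for all $\phi$ and all $i\in\{2,\dots,p_\phi\}$.

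The crux — and the step I expect to carry the real weight — is a scale-invariant two-sided bound on the right-hand side. Since $Q^\phi_1$ and $Q^\phi_i$ are sums over the \emph{same} index set $\vec\kappa_{-\phi}$, with matching terms differing only by the factor $\lambda_{(\vec\kappa_{-\phi};1),\phi}/\lambda_{(\vec\kappa_{-\phi};i),\phi}$ while the common weights $\prod_{\psi\ne\phi}\theta^\psi_{\kappa_\psi}$ cancel, the elementary inequality $\min_k(a_k/b_k)\le(\sum_k a_k)/(\sum_k b_k)\le\max_k(a_k/b_k)$ for positive $a_k,b_k$ yields $Q^\phi_1(\theta^{-\phi})/Q^\phi_i(\theta^{-\phi})\in[L^\phi_i,U^\phi_i]$ for \emph{every} positive $\theta^{-\phi}$, where $L^\phi_i$ and $U^\phi_i$ are the minimum and maximum of those $\lambda$-ratios and depend only on the given data. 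Hence the map $F$ sending $\theta$ to the point with coordinates $\rho^\phi_i\,Q^\phi_1(\theta^{-\phi})/Q^\phi_i(\theta^{-\phi})$ is continuous and maps the compact convex box $\mathcal B=\prod_{\phi}\prod_{i=2}^{p_\phi}\big[\rho^\phi_i L^\phi_i,\ \rho^\phi_i U^\phi_i\big]\subset(0,\infty)^{\,n-m}$ into itself.

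Brouwer's fixed-point theorem then provides $\theta^\star\in\mathcal B$ with $F(\theta^\star)=\theta^\star$ and all coordinates strictly positive. Setting $\gamma_l=\prod_{\phi}\theta^{\star,\phi}_{\kappa_\phi(l)}>0$ and reading the computation backwards, $P^\phi_i=\rho^\phi_i\,Q^\phi_1(\theta^{\star,-\phi})$ and $P^\phi_1=Q^\phi_1(\theta^{\star,-\phi})$, so $P^\phi_{j+1}/P^\phi_1=\rho^\phi_{j+1}=k^\phi_j$ for every group $\phi$ and every $j$, which is the asserted conclusion with all stakes positive. The degenerate cases are harmless: a group with $p_\phi=1$ imposes no constraint and contributes only an overall scaling, and $m=1$ needs no fixed point at all since each $Q^\phi_i$ is then constant and the $\theta^\phi_i$ are given in closed form. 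A less explicit alternative would be to note that the ratio conditions form $n-m$ homogeneous linear equations in the $N$ stakes and invoke a theorem of the alternative of Stiemke/Gordan type to extract a strictly positive solution; the fixed-point route above is self-contained and delivers positivity directly.
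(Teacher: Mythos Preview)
Your proof is correct and takes a genuinely different route from the paper. The paper's argument is an explicit, purely elementary recursion: it \emph{strengthens} each ratio constraint $k^\phi_j$ to a family of term-by-term equalities (one per tuple $\vec\kappa_{-\phi}$), solves those to express every $\gamma_{\kappa_1,\dots,\kappa_m}$ as a positive multiple of $\gamma_{1,\kappa_2,\dots,\kappa_m}$, then substitutes and repeats group by group until everything is a positive multiple of $\gamma_{1,1,\dots,1}$. Your approach instead restricts to the product ansatz $\gamma=\prod_\phi\theta^\phi_{\kappa_\phi}$, reduces the ratio requirements to a fixed-point equation, and closes it with Brouwer after the key scale-free mediant bound on $Q^\phi_1/Q^\phi_i$. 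Your argument is considerably shorter and conceptually cleaner; the paper's buys a fully explicit formula and needs no topology. One practical difference worth flagging: the paper's proof of Theorem~1 explicitly invokes the \emph{term-by-term proportionality} produced by its construction (``the product terms of $\lambda$ and $\gamma$ corresponding to the numerator and denominator are proportional to $k^\delta_\zeta$''), a feature your product-form solution does not share. This does not affect the validity of your proof of the lemma, and in fact only the lemma's \emph{statement} is needed downstream (it yields $\sum_i\gamma_i[A]^m_{s_i}D^m_{s_i}=(AEK)D'$ for a suitable positive diagonal $D'$), but if your proof were substituted into the paper the wording of the Theorem~1 argument would need to be adjusted accordingly.
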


\begin{lemma} \label{pailiezuhe_Lemma}
	Consider $m$ groups of cards. Let $\phi$ denote the current group index with $0 < \phi \leq m$, where each group contains $p_\phi$ distinct cards. A player can place a bet on each combination of \(m\) cards, selecting one card from each group. Let \(\kappa_{\phi}\) be the card chosen from the \(\phi\)-th group. The payoff for a specific combination is denoted by $\gamma_{\kappa_1, \kappa_2, \ldots, \kappa_m} \ge 0$. The payoff for each card in any combination can be calculated by the product of $\gamma_{\kappa_1, \kappa_2, \ldots, \kappa_m}$ and a positive proportionality factor \(\lambda\), which varies both with the combination and within a combination among different cards. Consequently, the payoff for each card differs across combinations and also varies within a single combination due to differences in \(\lambda\). The total payoff for each card is the sum of its payoffs from all combinations. Suppose the player desires a specific ratio \(k_j^\phi > 0\) between the payoff of the \((j+1)\)-th card of group \(\phi\) and the first card of that group.
	
	Then, it is always possible to find a set of positive values for $\gamma_{\kappa_1, \kappa_2, \ldots, \kappa_m} \ge 0$ such that the desired ratios $k_j^\phi$ are achieved for each set.
\end{lemma}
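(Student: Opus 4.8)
The plan is to turn the statement into the solvability of a homogeneous linear system over the strictly positive orthant and then settle the sign question with a theorem of the alternative. Index a combination by a tuple $c=(c_1,\dots,c_m)$ with $c_\phi\in\{1,\dots,p_\phi\}$, so that there are $N=\prod_{\phi=1}^{m}p_\phi$ combinations; write $\lambda_{c,\phi}>0$ for the proportionality factor attached to the card of group $\phi$ inside combination $c$, and $\gamma_c\ge 0$ for the bet placed on $c$. The total payoff of the $j$-th card of group $\phi$ is $P(\phi,j)=\sum_{c:\,c_\phi=j}\gamma_c\lambda_{c,\phi}$. Setting $r^\phi_1=1$ and $r^\phi_{j+1}=k^\phi_j$, the requirement $P(\phi,j+1)/P(\phi,1)=k^\phi_j$ becomes the linear family
\begin{equation*}
\sum_{c:\,c_\phi=j}\gamma_c\lambda_{c,\phi}\;-\;r^\phi_j\sum_{c:\,c_\phi=1}\gamma_c\lambda_{c,\phi}\;=\;0,\qquad \phi=1,\dots,m,\quad j=2,\dots,p_\phi,
\end{equation*}
which I collect into a matrix $M$ with rows indexed by the pairs $(\phi,j)$ and columns by the combinations $c$. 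What must be shown is that $M\gamma=0$ has a coordinatewise strictly positive solution $\gamma$.

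\textbf{Reduction via a theorem of the alternative.} By Stiemke's lemma, either $M\gamma=0$ has a strictly positive solution, or there is a vector $y$ (indexed by the pairs $(\phi,j)$, $j\ge 2$) with $M^{T}y\ge 0$ componentwise and $M^{T}y\ne 0$; it therefore suffices to exclude the latter, i.e. to prove $M^{T}y\ge 0\Rightarrow y=0$. Writing out the $c$-th component of $M^{T}y$ and introducing, for each group $\phi$, the auxiliary numbers $z_{\phi,j}=y_{\phi,j}$ for $j\ge 2$ and $z_{\phi,1}=-\sum_{j\ge 2}r^\phi_j y_{\phi,j}$, a short computation shows that $M^{T}y\ge 0$ is precisely the system
\begin{equation*}
\sum_{\phi=1}^{m}\lambda_{c,\phi}\,z_{\phi,c_\phi}\;\ge\;0\qquad\text{for every combination }c,
\end{equation*}
while by construction $\sum_{j=1}^{p_\phi}r^\phi_j z_{\phi,j}=0$ for each $\phi$, and $y=0$ iff $z\equiv 0$. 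So I have reduced to the following: if the $r^\phi_j$ are positive, $\sum_j r^\phi_j z_{\phi,j}=0$ for every $\phi$, and $\sum_\phi\lambda_{c,\phi}z_{\phi,c_\phi}\ge 0$ for every $c$, then $z\equiv 0$.

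\textbf{The combinatorial core.} Suppose $z\not\equiv 0$ and pick a group $\phi_0$ with $z_{\phi_0,\cdot}\ne 0$. Since $\sum_j r^{\phi_0}_j z_{\phi_0,j}=0$ with all weights positive, some coordinate is strictly negative, say $z_{\phi_0,i_0}<0$. For each other group $\phi\ne\phi_0$ the same relation yields an index $c_\phi$ with $z_{\phi,c_\phi}\le 0$ (any index if $z_{\phi,\cdot}=0$, a strictly negative one otherwise). Forming the combination $c$ with $c_{\phi_0}=i_0$ and these indices elsewhere, the term $\lambda_{c,\phi_0}z_{\phi_0,i_0}$ is strictly negative and every other term $\lambda_{c,\phi}z_{\phi,c_\phi}$ is $\le 0$, so $\sum_\phi\lambda_{c,\phi}z_{\phi,c_\phi}<0$, contradicting the inequality above. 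Hence $z\equiv 0$, the second alternative fails, and the desired strictly positive $\gamma$ exists.

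\textbf{Main obstacle and link to Lemma \ref{simul_lemma22a}.} A dimension count ($\prod_\phi p_\phi$ unknowns against $\sum_\phi(p_\phi-1)$ equations) already makes the system generically solvable, so the whole difficulty is the \emph{sign} of the solution; this is exactly why I route the argument through Stiemke's lemma rather than Gaussian elimination, and the real work is then the short combinatorial claim, whose proof only uses that within each group the payoffs form a fixed positive combination pinned to a prescribed weighted total, which forces a sign change. In the application, the factors $\lambda_{c,\phi}$ will be the diagonal entries of the individual Lyapunov scalings $D_c$ of the $m$-order squared matrices $[A]^m_{s_c}$ (Definition \ref{def_NSQ_lyp_ind}) and the ratios $k^\phi_j$ read off from $E$ and $K$; the positive weights $\gamma_c$ produced here then exhibit $AEK$, up to a positive diagonal scaling, as the positive combination $\sum_c\gamma_c[A]^m_{s_c}D_c$, which is Volterra--Lyapunov stable, so that Lemma \ref{lemma2} applies columnwise to yield \eqref{suf_con_1}.
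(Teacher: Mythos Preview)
Your argument is correct and takes a genuinely different route from the paper. The paper's proof is an explicit recursive construction: it processes the groups one at a time, imposes the stronger term-by-term equalities (each numerator summand over its matching denominator summand equal to $k^\phi_j$) rather than the aggregate ratios, and thereby expresses every $\gamma$ as a positive multiple of the single seed $\gamma_{1,1,\dots,1}$, checking positivity of all coefficients along the way. By contrast, you linearise the ratio constraints into the homogeneous system $M\gamma=0$, invoke Stiemke's lemma to reduce strict positivity of a solution to ruling out $M^{T}y\ge 0$, $M^{T}y\ne 0$, and then dispatch that alternative with a clean combinatorial contradiction using the weighted zero-sum relations $\sum_j r^\phi_j z_{\phi,j}=0$. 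The paper's approach is constructive and yields an explicit formula for the $\gamma$'s (useful downstream if one wants numbers), but the bookkeeping is heavy and the positivity checks are scattered; your approach is non-constructive but far shorter, and the sign obstruction---which, as you note, is the only real issue once one counts equations against unknowns---is isolated into a single transparent step. One cosmetic mismatch: the paper's convention in the appendix puts the first card in the numerator (so their $k^\phi_j$ is $P(\phi,1)/P(\phi,j+1)$), the inverse of yours, but since $k^\phi_j>0$ this is immaterial to either argument.
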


\begin{proof}
See Appendix A.
\end{proof}

The proof of Lemma \ref{pailiezuhe_Lemma} is constructive, involving the creation of all necessary $\gamma$ values and verifying they meet the lemma's requirements. To further substantiate the theorem, we present a numerical example that highlights the effectiveness of the proposed constructive method.

\subsection{Illustrative Example}

Consider an example with $3$ groups, each containing $3$ elements. We need to traverse and combine each element in each group without repetition, resulting in a total of $27 $combinations, denoted as \( \gamma_{1,1,1} \) to \( \gamma_{3,3,3} \). There are $81$ parameters \( \lambda \). For the proportions \( k \), there are a total of $6$, denoted as \( k^{1}_{1} \), \( k^{1}_{2} \), \( k^{2}_{1} \), \( k^{2}_{2} \), \( k^{3}_{1} \), \( k^{3}_{2} \).

Using this setup, we can explore how to express and simplify the relationships between these \( k \) ratios using the \( \gamma \) and \( \lambda \) parameters.

For the proportions \(k^{1}_{1}\) and \(k^{1}_{2}\), two equations can be established.

\scalebox{0.92}{%
$\frac{\gamma_{1,1,1}\lambda_{1,1,1}^{1}+\gamma_{1,1,2}\lambda_{1,1,2}^{1}+\gamma_{1,1,3}\lambda_{1,1,3}^{1}+\gamma_{1,2,1}\lambda_{1,2,1}^{1}+\gamma_{1,2,2}\lambda_{1,2,2}^{1}+\gamma_{1,2,3}\lambda_{1,2,3}^{1}+\gamma_{1,3,1}\lambda_{1,3,1}^{1}+\gamma_{1,3,2}\lambda_{1,3,2}^{1}+\gamma_{1,3,3}\lambda_{1,3,3}^{1}}{\gamma_{2,1,1}\lambda_{2,1,1}^{1}+\gamma_{2,1,2}\lambda_{2,1,2}^{1}+\gamma_{2,1,3}\lambda_{2,1,3}^{1}+\gamma_{2,2,1}\lambda_{2,2,1}^{1}+\gamma_{2,2,2}\lambda_{2,2,2}^{1}+\gamma_{2,2,3}\lambda_{2,2,3}^{1}+\gamma_{2,3,1}\lambda_{2,3,1}^{1}+\gamma_{2,3,2}\lambda_{2,3,2}^{1}+\gamma_{2,3,3}\lambda_{2,3,3}^{1}}=k^{1}_{1}
$}

\scalebox{0.92}{%
$\frac{\gamma_{1,1,1}\lambda_{1,1,1}^{1}+\gamma_{1,1,2}\lambda_{1,1,2}^{1}+\gamma_{1,1,3}\lambda_{1,1,3}^{1}+\gamma_{1,2,1}\lambda_{1,2,1}^{1}+\gamma_{1,2,2}\lambda_{1,2,2}^{1}+\gamma_{1,2,3}\lambda_{1,2,3}^{1}+\gamma_{1,3,1}\lambda_{1,3,1}^{1}+\gamma_{1,3,2}\lambda_{1,3,2}^{1}+\gamma_{1,3,3}\lambda_{1,3,3}^{1}}{\gamma_{3,1,1}\lambda_{3,1,1}^{1}+\gamma_{3,1,2}\lambda_{3,1,2}^{1}+\gamma_{3,1,3}\lambda_{3,1,3}^{1}+\gamma_{3,2,1}\lambda_{3,2,1}^{1}+\gamma_{3,2,2}\lambda_{3,2,2}^{1}+\gamma_{3,2,3}\lambda_{3,2,3}^{1}+\gamma_{3,3,1}\lambda_{3,3,1}^{1}+\gamma_{3,3,2}\lambda_{3,3,2}^{1}+\gamma_{3,3,3}\lambda_{3,3,3}^{1}}=k^{1}_{2}
$}

For the proportions \(k^{1}_{1}\) and \(k^{1}_{2}\), we arrange the product of \(\gamma\) and \(\lambda\) in the numerator and denominator in ascending order of \(\gamma\) subscripts, ensuring each product term corresponds one-to-one. The following equations are established:

For \( k^{1}_{1} \):

\[
\begin{aligned}
	&\frac{\gamma_{1,1,1} \lambda_{1,1,1}^{1}}{\gamma_{2,1,1} \lambda_{2,1,1}^{1}} = k^{1}_{1}, \quad \frac{\gamma_{1,1,2} \lambda_{1,1,2}^{1}}{\gamma_{2,1,2} \lambda_{2,1,2}^{1}} = k^{1}_{1}, \quad \frac{\gamma_{1,1,3} \lambda_{1,1,3}^{1}}{\gamma_{2,1,3} \lambda_{2,1,3}^{1}} = k^{1}_{1}, \\
	&\frac{\gamma_{1,2,1} \lambda_{1,2,1}^{1}}{\gamma_{2,2,1} \lambda_{2,2,1}^{1}} = k^{1}_{1}, \quad \frac{\gamma_{1,2,2} \lambda_{1,2,2}^{1}}{\gamma_{2,2,2} \lambda_{2,2,2}^{1}} = k^{1}_{1}, \quad \frac{\gamma_{1,2,3} \lambda_{1,2,3}}{\gamma_{2,2,3} \lambda_{2,2,3}} = k^{1}_{1}, \\
	&\frac{\gamma_{1,3,1} \lambda_{1,3,1}^{1}}{\gamma_{2,3,1} \lambda_{2,3,1}^{1}} = k^{1}_{1}, \quad \frac{\gamma_{1,3,2} \lambda_{1,3,2}^{1}}{\gamma_{2,3,2} \lambda_{2,3,2}^{1}} = k^{1}_{1}, \quad \frac{\gamma_{1,3,3} \lambda_{1,3,3}^{1}}{\gamma_{2,3,3} \lambda_{2,3,3}^{1}} = k^{1}_{1}.
\end{aligned}
\]

Similarly, for \( k^{1}_{2} \):

\[
\begin{aligned}
	&\frac{\gamma_{1,1,1} \lambda_{1,1,1}^{1}}{\gamma_{3,1,1} \lambda_{3,1,1}^{1}} = k^{1}_{2}, \quad \frac{\gamma_{1,1,2} \lambda_{1,1,2}^{1}}{\gamma_{3,1,2} \lambda_{3,1,2}^{1}} = k^{1}_{2}, \quad \frac{\gamma_{1,1,3} \lambda_{1,1,3}^{1}}{\gamma_{3,1,3} \lambda_{3,1,3}^{1}} = k^{1}_{2}, \\
	&\frac{\gamma_{1,2,1} \lambda_{1,2,1}^{1}}{\gamma_{3,2,1} \lambda_{3,2,1}^{1}} = k^{1}_{2}, \quad \frac{\gamma_{1,2,2} \lambda_{1,2,2}^{1}}{\gamma_{3,2,2} \lambda_{3,2,2}^{1}} = k^{1}_{2}, \quad \frac{\gamma_{1,2,3} \lambda_{1,2,3}^{1}}{\gamma_{3,2,3} \lambda_{3,2,3}^{1}} = k^{1}_{2}, \\
	&\frac{\gamma_{1,3,1} \lambda_{1,3,1}^{1}}{\gamma_{3,3,1} \lambda_{3,3,1}^{1}} = k^{1}_{2}, \quad \frac{\gamma_{1,3,2} \lambda_{1,3,2}^{1}}{\gamma_{3,3,2} \lambda_{3,3,2}^{1}} = k^{1}_{2}, \quad \frac{\gamma_{1,3,3} \lambda_{1,3,3}^{1}}{\gamma_{3,3,3} \lambda_{3,3,3}^{1}} = k^{1}_{2}.
\end{aligned}
\]

Using \(\gamma_{1,1,1}\), \(\gamma_{1,1,2}\), \(\gamma_{1,1,3}\), \(\gamma_{1,2,1}\), \(\gamma_{1,2,2}\), \(\gamma_{1,2,3}\), \(\gamma_{1,3,1}\), \(\gamma_{1,3,2}\), and \(\gamma_{1,3,3}\), we can represent the remaining \(18 \gamma\) terms.

For the proportions \(k^{2}_{1}\) and \(k^{2}_{2}\), two equations can be established.

\scalebox{0.9}{%
$\frac{\gamma_{1,1,1}\lambda_{1,1,1}^{2}+\gamma_{1,1,2}\lambda_{1,1,2}^{2}+\gamma_{1,1,3}\lambda_{1,1,3}^{2}+\gamma_{2,1,1}\lambda_{2,1,1}^{2}+\gamma_{2,1,2}\lambda_{2,1,2}^{2}+\gamma_{2,1,3}\lambda_{2,1,3}^{2}+\gamma_{3,1,1}\lambda_{3,1,1}^{2}+\gamma_{3,1,2}\lambda_{3,1,2}^{2}+\gamma_{3,1,3}\lambda_{3,1,3}^{2}}{\gamma_{1,2,1}\lambda_{1,2,1}^{2}+\gamma_{1,2,2}\lambda_{1,2,2}^{2}+\gamma_{1,2,3}\lambda_{1,2,3}^{2}+\gamma_{2,2,1}\lambda_{2,2,1}^{2}+\gamma_{2,2,2}\lambda_{2,2,2}^{2}+\gamma_{2,2,3}\lambda_{2,2,3}^{2}+\gamma_{3,2,1}\lambda_{3,2,1}^{2}+\gamma_{3,2,2}\lambda_{3,2,2}^{2}+\gamma_{3,2,3}\lambda_{3,2,3}^{2}}=k^{2}_{1}
$}

\scalebox{0.9}{%
$\frac{\gamma_{1,1,1}\lambda_{1,1,1}^{2}+\gamma_{1,1,2}\lambda_{1,1,2}^{2}+\gamma_{1,1,3}\lambda_{1,1,3}^{2}+\gamma_{2,1,1}\lambda_{2,1,1}^{2}+\gamma_{2,1,2}\lambda_{2,1,2}^{2}+\gamma_{2,1,3}\lambda_{2,1,3}^{2}+\gamma_{3,1,1}\lambda_{3,1,1}^{2}+\gamma_{3,1,2}\lambda_{3,1,2}^{2}+\gamma_{3,1,3}\lambda_{3,1,3}^{2}}{\gamma_{1,3,1}\lambda_{1,3,1}^{2}+\gamma_{1,3,2}\lambda_{1,3,2}^{2}+\gamma_{1,3,3}\lambda_{1,3,3}^{2}+\gamma_{2,3,1}\lambda_{2,3,1}^{2}+\gamma_{2,3,2}\lambda_{2,3,2}^{2}+\gamma_{2,3,3}\lambda_{2,3,3}^{2}+\gamma_{3,3,1}\lambda_{3,3,1}^{2}+\gamma_{3,3,2}\lambda_{3,3,2}^{2}+\gamma_{3,3,3}\lambda_{3,3,3}^{2}}=k^{2}_{2}
$}

Given the formulas for \( k^{1}_{1} \) and \( k^{1}_{2} \), we have the substitutions:
\[
\begin{aligned}
	\gamma_{2,1,1} &= \frac{\gamma_{1,1,1} \lambda_{1,1,1}^{1}}{k^{1}_{1} \lambda_{2,1,1}^{1}}, \quad
	\gamma_{2,1,2} &= \frac{\gamma_{1,1,2} \lambda_{1,1,2}^{1}}{k^{1}_{1} \lambda_{2,1,2}^{1}}, \quad
	\gamma_{2,1,3} &= \frac{\gamma_{1,1,3} \lambda_{1,1,3}^{1}}{k^{1}_{1} \lambda_{2,1,3}^{1}}, \\
	\gamma_{2,2,1} &= \frac{\gamma_{1,2,1} \lambda_{1,2,1}^{1}}{k^{1}_{1} \lambda_{2,2,1}^{1}}, \quad
	\gamma_{2,2,2} &= \frac{\gamma_{1,2,2} \lambda_{1,2,2}^{1}}{k^{1}_{1} \lambda_{2,2,2}^{1}}, \quad
	\gamma_{2,2,3} &= \frac{\gamma_{1,2,3} \lambda_{1,2,3}^{1}}{k^{1}_{1} \lambda_{2,2,3}^{1}}, \\
	\gamma_{2,3,1} &= \frac{\gamma_{1,3,1} \lambda_{1,3,1}^{1}}{k^{1}_{1} \lambda_{2,3,1}^{1}}, \quad
	\gamma_{2,3,2} &= \frac{\gamma_{1,3,2} \lambda_{1,3,2}^{1}}{k^{1}_{1} \lambda_{2,3,2}^{1}}, \quad
	\gamma_{2,3,3} &= \frac{\gamma_{1,3,3} \lambda_{1,3,3}^{1}}{k^{1}_{1} \lambda_{2,3,3}^{1}}.
\end{aligned}
\]

\[
\begin{aligned}
	\gamma_{3,1,1} &= \frac{\gamma_{1,1,1} \lambda_{1,1,1}^{1}}{k^{1}_{2} \lambda_{3,1,1}^{1}}, \quad
	\gamma_{3,1,2} &= \frac{\gamma_{1,1,2} \lambda_{1,1,2}^{1}}{k^{1}_{2} \lambda_{3,1,2}^{1}}, \quad
	\gamma_{3,1,3} &= \frac{\gamma_{1,1,3} \lambda_{1,1,3}^{1}}{k^{1}_{2} \lambda_{3,1,3}^{1}}, \\
	\gamma_{3,2,1} &= \frac{\gamma_{1,2,1} \lambda_{1,2,1}^{1}}{k^{1}_{2} \lambda_{3,2,1}^{1}}, \quad
	\gamma_{3,2,2} &= \frac{\gamma_{1,2,2} \lambda_{1,2,2}^{1}}{k^{1}_{2} \lambda_{3,2,2}^{1}}, \quad
	\gamma_{3,2,3} &= \frac{\gamma_{1,2,3} \lambda_{1,2,3}^{1}}{k^{1}_{2} \lambda_{3,2,3}^{1}}, \\
	\gamma_{3,3,1} &= \frac{\gamma_{1,3,1} \lambda_{1,3,1}^{1}}{k^{1}_{2} \lambda_{3,3,1}^{1}}, \quad
	\gamma_{3,3,2} &= \frac{\gamma_{1,3,2} \lambda_{1,3,2}^{1}}{k^{1}_{2} \lambda_{3,3,2}^{1}}, \quad
	\gamma_{3,3,3} &= \frac{\gamma_{1,3,3} \lambda_{1,3,3}^{1}}{k^{1}_{2} \lambda_{3,3,3}^{1}}.
\end{aligned}
\]

All the required \(\gamma\) terms can be represented using \(\gamma_{1,1,1}\), \(\gamma_{1,1,2}\), \(\gamma_{1,1,3}\), \(\gamma_{1,2,1}\), \(\gamma_{1,2,2}\), \(\gamma_{1,2,3}\), \(\gamma_{1,3,1}\), \(\gamma_{1,3,2}\), and \(\gamma_{1,3,3}\) with positive coefficients. Therefore, the formulas for \(k^{1}_{1}\) and \(k^{1}_{2}\) can be expressed using these \(\gamma\) terms.

\scalebox{0.86}{%
$\frac{(\lambda_{1,1,1}^{2}+\frac{\lambda_{2,1,1}^{2}\lambda_{1,1,1}^{1}}{ \lambda_{2,1,1}^{1} k^{1}_{1}}+\frac{\lambda_{3,1,1}^{2}\lambda_{1,1,1}^{1}}{\lambda_{3,1,1}^{1}k^{1}_{2}})\gamma_{1,1,1}+(\lambda_{1,1,2}^{2}+\frac{\lambda_{2,1,2}^{2}\lambda_{1,1,2}^{1}}{\lambda_{2,1,2}^{1} k^{1}_{1}}+\frac{\lambda_{3,1,2}^{2}\lambda_{1,1,2}^{1}}{\lambda_{3,1,2}^{1}k^{1}_{2}})\gamma_{1,1,2}+(\lambda_{1,1,3}^{2}+\frac{\lambda_{2,1,3}^{2}\lambda_{1,1,3}^{1}}{\lambda_{2,1,3}^{1} k^{1}_{1}}+\frac{\lambda_{3,1,3}^{2}\lambda_{1,1,3}^{1}}{\lambda_{3,1,3}^{1}k^{1}_{2}})\gamma_{1,1,3}}{(\lambda_{1,2,1}^{2}+\frac{\lambda_{2,1,1}^{2}\lambda_{1,2,1}^{1}}{\lambda_{2,2,1}^{1}k^{1}_{1}}+\frac{\lambda_{3,2,1}^{2}\lambda_{1,2,1}^{1}}{\lambda_{3,2,1}^{1}k^{1}_{2}})\gamma_{1,2,1}+(\lambda_{1,2,2}^{2}+\frac{\lambda_{2,2,2}^{2}\lambda_{1,2,2}^{1}}{\lambda_{2,2,2}^{1}k^{1}_{1}}+\frac{\lambda_{3,2,2}^{2}\lambda_{1,2,2}^{1}}{\lambda_{3,2,2}^{1}k^{1}_{2}})\gamma_{1,2,2}+(\lambda_{1,2,3}^{2}+\frac{\lambda_{2,2,3}^{2}\lambda_{1,2,3}^{1}}{\lambda_{2,2,3}^{1}k^{1}_{1}}+\frac{\lambda_{3,2,3}^{2}\lambda_{1,2,3}^{1}}{\lambda_{3,2,3}^{1}k^{1}_{2}})\gamma_{1,2,3}}=k^{2}_{1}
$}

\scalebox{0.86}{%
$\frac{(\lambda_{1,1,1}^{2}+\frac{\lambda_{2,1,1}^{2}\lambda_{1,1,1}^{1}}{ \lambda_{2,1,1}^{1} k^{1}_{1}}+\frac{\lambda_{3,1,1}^{2}\lambda_{1,1,1}^{1}}{\lambda_{3,1,1}^{1}k^{1}_{2}})\gamma_{1,1,1}+(\lambda_{1,1,2}^{2}+\frac{\lambda_{2,1,2}^{2}\lambda_{1,1,2}^{1}}{\lambda_{2,1,2}^{1} k^{1}_{1}}+\frac{\lambda_{3,1,2}^{2}\lambda_{1,1,2}^{1}}{\lambda_{3,1,2}^{1}k^{1}_{2}})\gamma_{1,1,2}+(\lambda_{1,1,3}^{2}+\frac{\lambda_{2,1,3}^{2}\lambda_{1,1,3}^{1}}{\lambda_{2,1,3}^{1} k^{1}_{1}}+\frac{\lambda_{3,1,3}^{2}\lambda_{1,1,3}^{1}}{\lambda_{3,1,3}^{1}k^{1}_{2}})\gamma_{1,1,3}}{(\lambda_{1,3,1}^{2}+\frac{\lambda_{2,3,1}^{2}\lambda_{1,3,1}^{1}}{\lambda_{2,3,1}^{1}k^{1}_{1}}+\frac{\lambda_{3,3,1}^{2}\lambda_{1,3,1}^{1}}{\lambda_{3,3,1}^{1}k^{1}_{2}})\gamma_{1,3,1}+(\lambda_{1,3,2}^{2}+\frac{\lambda_{2,3,2}^{2}\lambda_{1,3,2}^{1}}{\lambda_{2,3,2}^{1}k^{1}_{1}}+\frac{\lambda_{3,3,2}^{2}\lambda_{1,3,2}^{1}}{\lambda_{3,3,2}^{1}k^{1}_{2}})\gamma_{1,3,2}+(\lambda_{1,3,3}^{2}+\frac{\lambda_{2,3,3}^{2}\lambda_{1,3,3}^{1}}{\lambda_{2,3,3}^{1}k^{1}_{1}}+\frac{\lambda_{3,3,3}^{2}\lambda_{1,3,3}^{1}}{\lambda_{3,3,3}^{1}k^{1}_{2}})\gamma_{1,3,3}}=k^{2}_{2}
$}

Following the same method as before, we arrange the products of \(\gamma\) and \(\lambda\) in the numerator and denominator in ascending order of \(\gamma\) subscripts, ensuring each product term corresponds one-to-one. We then set the ratio of corresponding product terms in the numerator and denominator equal to the current proportion \(k\). Using this approach, we can represent \(\gamma_{1,2,1}\) and \(\gamma_{1,3,1}\) with \(\gamma_{1,1,1}\), \(\gamma_{1,2,2}\) and \(\gamma_{1,3,2}\) with \(\gamma_{1,1,2}\), and \(\gamma_{1,2,3}\) and \(\gamma_{1,3,3}\) with \(\gamma_{1,1,3}\), all with positive coefficients.

For the proportions \(k^{3}_{1}\) and \(k^{3}_{2}\), two equations can be established .

\scalebox{0.9}{%
$\frac{\gamma_{1,1,1}\lambda_{1,1,1}^{3}+\gamma_{1,2,1}\lambda_{1,2,1}^{3}+\gamma_{1,3,1}\lambda_{1,3,1}^{3}+\gamma_{2,1,1}\lambda_{2,1,1}^{3}+\gamma_{2,2,1}\lambda_{2,2,1}^{3}+\gamma_{2,3,1}\lambda_{2,3,1}^{3}+\gamma_{3,1,1}\lambda_{3,1,1}^{3}+\gamma_{3,2,1}\lambda_{3,2,1}^{3}+\gamma_{3,3,1}\lambda_{3,3,1}^{3}}{\gamma_{1,1,2}\lambda_{1,1,2}^{3}+\gamma_{1,2,2}\lambda_{1,2,2}^{3}+\gamma_{1,3,2}\lambda_{1,3,2}^{3}+\gamma_{2,1,2}\lambda_{2,1,2}^{3}+\gamma_{2,2,2}\lambda_{2,2,2}^{3}+\gamma_{2,3,2}\lambda_{2,3,2}^{3}+\gamma_{3,1,2}\lambda_{3,1,2}^{3}+\gamma_{3,2,2}\lambda_{3,2,2}^{3}+\gamma_{3,3,2}\lambda_{3,3,2}^{3}}=k^{3}_{1}
$}

\scalebox{0.9}{%
$\frac{\gamma_{1,1,1}\lambda_{1,1,1}^{3}+\gamma_{1,2,1}\lambda_{1,2,1}^{3}+\gamma_{1,3,1}\lambda_{1,3,1}^{3}+\gamma_{2,1,1}\lambda_{2,1,1}^{3}+\gamma_{2,2,1}\lambda_{2,2,1}^{3}+\gamma_{2,3,1}\lambda_{2,3,1}^{3}+\gamma_{3,1,1}\lambda_{3,1,1}^{3}+\gamma_{3,2,1}\lambda_{3,2,1}^{3}+\gamma_{3,3,1}\lambda_{3,3,1}^{3}}{\gamma_{1,1,3}\lambda_{1,1,3}^{3}+\gamma_{1,2,3}\lambda_{1,2,3}^{3}+\gamma_{1,3,3}\lambda_{1,3,3}^{3}+\gamma_{2,1,3}\lambda_{2,1,3}^{3}+\gamma_{2,2,3}\lambda_{2,2,3}^{3}+\gamma_{2,3,3}\lambda_{2,3,3}^{3}+\gamma_{3,1,3}\lambda_{3,1,3}^{3}+\gamma_{3,2,3}\lambda_{3,2,3}^{3}+\gamma_{3,3,3}\lambda_{3,3,3}^{3}}=k^{3}_{2}
$}

Since we can represent the remaining  \(18\gamma\) terms using \(\gamma_{1,1,1}\), \(\gamma_{1,1,2}\), \(\gamma_{1,1,3}\), \(\gamma_{1,2,1}\), \(\gamma_{1,2,2}\), \(\gamma_{1,2,3}\), \(\gamma_{1,3,1}\), \(\gamma_{1,3,2}\), and \(\gamma_{1,3,3}\), and further represent \(\gamma_{1,2,1}\) and \(\gamma_{1,3,1}\) with \(\gamma_{1,1,1}\), \(\gamma_{1,2,2}\) and \(\gamma_{1,3,2}\) with \(\gamma_{1,1,2}\), and \(\gamma_{1,2,3}\) and \(\gamma_{1,3,3}\) with \(\gamma_{1,1,3}\), we can ultimately use \(\gamma_{1,1,1}\), \(\gamma_{1,1,2}\), and \(\gamma_{1,1,3}\) to represent all remaining \(\gamma\) terms. Therefore, in the formulas, we replace all other \(\gamma\) terms with \(\gamma_{1,1,1}\), \(\gamma_{1,1,2}\), and \(\gamma_{1,1,3}\).

\scalebox{0.58}{%
$
 \frac{\gamma_{1,1,1} ( \lambda_{1,1,1}^{3}+\frac{\lambda_{2,1,1}^{3}\lambda_{1,1,1}^{1}}{\lambda_{2,1,1}^{1}k^1_1} +\frac{\lambda_{3,1,1}^{3}\lambda_{1,1,1}^{1}}{\lambda_{3,1,1}^{1} k_2^1}+ \frac{\lambda_{1,1,1}^{2}+\frac{\lambda_{2,1,1}^{2}\lambda_{1,1,1}^{1}}{ \lambda_{2,1,1}^{1} k^{1}_{1}}+\frac{\lambda_{3,1,1}^{2}\lambda_{1,1,1}^{1}}{\lambda_{3,1,1}^{1}k^{1}_{2}}}{(\lambda_{1,2,1}^{2}+\frac{\lambda_{2,1,1}^{2}\lambda_{1,2,1}^{1}}{\lambda_{2,2,1}^{1}k^{1}_{1}}+\frac{\lambda_{3,2,1}^{2}\lambda_{1,2,1}^{1}}{\lambda_{3,2,1}^{1}k^{1}_{2}})k_1^2}(\lambda_{1,2,1}^{3}+\frac{\lambda_{2,2,1}^{3}\lambda_{1,2,1}^{1}}{\lambda_{2,2,1}^{1} k_1^1}+\frac{\lambda_{3,2,1}^{3}\lambda_{1,2,1}^{1}}{\lambda_{3,2,1}^{1} k_2^1})+\frac{\lambda_{1,1,1}^{2}+\frac{\lambda_{2,1,1}^{2}\lambda_{1,1,1}^{1}}{ \lambda_{2,1,1}^{1} k^{1}_{1}}+\frac{\lambda_{3,1,1}^{2}\lambda_{1,1,1}^{1}}{\lambda_{3,1,1}^{1}k^{1}_{2}}}{(\lambda_{1,3,1}^{2}+\frac{\lambda_{2,3,1}^{2}\lambda_{1,3,1}^{1}}{\lambda_{2,3,1}^{1}k^{1}_{1}}+\frac{\lambda_{3,3,1}^{2}\lambda_{1,3,1}^{1}}{\lambda_{3,3,1}^{1}k^{1}_{2}})k_2^2}(\lambda_{1,3,1}^{3}+\frac{\lambda_{2,3,1}^{3}\lambda_{1,3,1}^{1}}{\lambda_{2,3,1}^{1} k_1^1}+\frac{\lambda_{3,3,1}^{3}\lambda_{1,3,1}^{1}}{\lambda_{3,3,1}^{1} k_2^1}) )}{\gamma_{1,1,2} (\lambda_{1,1,2}^{3}+\frac{\lambda_{2,1,2}^{3}\lambda_{1,1,2}^{1}}{\lambda_{2,1,2}^{1}k_1^1}+\frac{\lambda_{3,1,2}^{3}\lambda_{1,1,2}^{1}}{\lambda_{3,1,2}^{1}k_2^1}+\frac{\lambda_{1,1,2}^{2}+\frac{\lambda_{2,1,2}^{2}\lambda_{1,1,2}^{1}}{\lambda_{2,1,2}^{1} k^{1}_{1}}+\frac{\lambda_{3,1,2}^{2}\lambda_{1,1,2}^{1}}{\lambda_{3,1,2}^{1}k^{1}_{2}}}{(\lambda_{1,2,2}^{2}+\frac{\lambda_{2,2,2}^{2}\lambda_{1,2,2}^{1}}{\lambda_{2,2,2}^{1}k^{1}_{1}}+\frac{\lambda_{3,2,2}^{2}\lambda_{1,2,2}^{1}}{\lambda_{3,2,2}^{1}k^{1}_{2}})k_1^2}(\lambda_{1,2,2}^{3}+\frac{\lambda_{2,2,2}^{3}\lambda_{1,2,2}^{1}}{\lambda_{2,2,2}^{1}k_1^1}+\frac{\lambda_{3,2,2}^{3}\lambda_{1,2,2}^{1}}{\lambda_{3,2,2}^{1} k_2^1})+\frac{\lambda_{1,1,2}^{2}+\frac{\lambda_{2,1,2}^{2}\lambda_{1,1,2}^{1}}{\lambda_{2,1,2}^{1} k^{1}_{1}}+\frac{\lambda_{3,1,2}^{2}\lambda_{1,1,2}^{1}}{\lambda_{3,1,2}^{1}k^{1}_{2}}}{(\lambda_{1,3,2}^{2}+\frac{\lambda_{2,3,2}^{2}\lambda_{1,3,2}^{1}}{\lambda_{2,3,2}^{1}k^{1}_{1}}+\frac{\lambda_{3,3,2}^{2}\lambda_{1,3,2}^{1}}{\lambda_{3,3,2}^{1}k^{1}_{2}})k_2^2}(\lambda_{1,3,2}^{3}+\frac{\lambda_{2,3,2}^{3}\lambda_{1,3,2}^{1}}{\lambda_{2,3,2}^{1}k_1^1}+\frac{\lambda_{3,3,2}^{3}\lambda_{1,3,2}^{1}}{\lambda_{3,3,2}^{1}k_2^1}))}=k^3_1
$
 }

\scalebox{0.58}{%

$
\frac{\gamma_{1,1,1} ( \lambda_{1,1,1}^{3}+\frac{\lambda_{2,1,1}^{3}\lambda_{1,1,1}^{1}}{\lambda_{2,1,1}^{1}k^1_1} +\frac{\lambda_{3,1,1}^{3}\lambda_{1,1,1}^{1}}{\lambda_{3,1,1}^{1} k_2^1}+ \frac{\lambda_{1,1,1}^{2}+\frac{\lambda_{2,1,1}^{2}\lambda_{1,1,1}^{1}}{ \lambda_{2,1,1}^{1} k^{1}_{1}}+\frac{\lambda_{3,1,1}^{2}\lambda_{1,1,1}^{1}}{\lambda_{3,1,1}^{1}k^{1}_{2}}}{(\lambda_{1,2,1}^{2}+\frac{\lambda_{2,1,1}^{2}\lambda_{1,2,1}^{1}}{\lambda_{2,2,1}^{1}k^{1}_{1}}+\frac{\lambda_{3,2,1}^{2}\lambda_{1,2,1}^{1}}{\lambda_{3,2,1}^{1}k^{1}_{2}})k_1^2}(\lambda_{1,2,1}^{3}+\frac{\lambda_{2,2,1}^{3}\lambda_{1,2,1}^{1}}{\lambda_{2,2,1}^{1} k_1^1}+\frac{\lambda_{3,2,1}^{3}\lambda_{1,2,1}^{1}}{\lambda_{3,2,1}^{1} k_2^1})+\frac{\lambda_{1,1,1}^{2}+\frac{\lambda_{2,1,1}^{2}\lambda_{1,1,1}^{1}}{ \lambda_{2,1,1}^{1} k^{1}_{1}}+\frac{\lambda_{3,1,1}^{2}\lambda_{1,1,1}^{1}}{\lambda_{3,1,1}^{1}k^{1}_{2}}}{(\lambda_{1,3,1}^{2}+\frac{\lambda_{2,3,1}^{2}\lambda_{1,3,1}^{1}}{\lambda_{2,3,1}^{1}k^{1}_{1}}+\frac{\lambda_{3,3,1}^{2}\lambda_{1,3,1}^{1}}{\lambda_{3,3,1}^{1}k^{1}_{2}})k_2^2}(\lambda_{1,3,1}^{3}+\frac{\lambda_{2,3,1}^{3}\lambda_{1,3,1}^{1}}{\lambda_{2,3,1}^{1} k_1^1}+\frac{\lambda_{3,3,1}^{3}\lambda_{1,3,1}^{1}}{\lambda_{3,3,1}^{1} k_2^1}) )}{\gamma_{1,1,3}(\lambda_{1,1,3}^{3}+\frac{\lambda_{2,1,3}^{3}\lambda_{1,1,3}^{1}}{\lambda_{2,1,3}^{1}k_1^1}+\frac{\lambda_{3,1,3}^{3}\lambda_{1,1,3}^{1}}{\lambda_{3,1,3}^{1}k_2^1}+\frac{\lambda_{1,1,3}^{2}+\frac{\lambda_{2,1,3}^{2}\lambda_{1,1,3}^{1}}{\lambda_{2,1,3}^{1} k^{1}_{1}}+\frac{\lambda_{3,1,3}^{2}\lambda_{1,1,3}^{1}}{\lambda_{3,1,3}^{1}k^{1}_{2}}}{(\lambda_{1,2,3}^{2}+\frac{\lambda_{2,2,3}^{2}\lambda_{1,2,3}^{1}}{\lambda_{2,2,3}^{1}k^{1}_{1}}+\frac{\lambda_{3,2,3}^{2}\lambda_{1,2,3}^{1}}{\lambda_{3,2,3}^{1}k^{1}_{2}})k_1^2}(\lambda_{1,2,3}^{3}+\frac{\lambda_{2,2,3}^{3}\lambda_{1,2,3}^{1}}{\lambda_{2,2,3}^{1}k_1^1}+\frac{\lambda_{3,2,3}^{3}\lambda_{1,2,3}^{1}}{\lambda_{3,2,3}^{1}k_2^1})+\frac{\lambda_{1,1,3}^{2}+\frac{\lambda_{2,1,3}^{2}\lambda_{1,1,3}^{1}}{\lambda_{2,1,3}^{1} k^{1}_{1}}+\frac{\lambda_{3,1,3}^{2}\lambda_{1,1,3}^{1}}{\lambda_{3,1,3}^{1}k^{1}_{2}}}{(\lambda_{1,3,3}^{2}+\frac{\lambda_{2,3,3}^{2}\lambda_{1,3,3}^{1}}{\lambda_{2,3,3}^{1}k^{1}_{1}}+\frac{\lambda_{3,3,3}^{2}\lambda_{1,3,3}^{1}}{\lambda_{3,3,3}^{1}k^{1}_{2}})k_2^2}(\lambda_{1,3,3}^{3}+\frac{\lambda_{2,2,3}^{3}\lambda_{1,3,3}^{1}}{\lambda_{2,3,3}^{1}k_1^1}+\frac{\lambda_{3,3,3}^{3}\lambda_{1,3,3}^{1}}{\lambda_{3,3,3}^{1}k_2^1}))} = k^3_2 $}

Upon organizing the formula, we find that \(\gamma_{1,1,1}\) can be used to represent \(\gamma_{1,1,2}\) and \(\gamma_{1,1,3}\), and all the coefficients involved in this representation are positive. Consequently, \(\gamma_{1,1,1}\) can be utilized to represent all the remaining  \(26\gamma\) terms, with all associated coefficients being positive.

According to the established proportional relationships between the \(\gamma\) terms, assigning any positive value to \(\gamma_{1,1,1}\) will satisfy the six proportional relations \(k^{1}_{1}\), \(k^{1}_{2}\), \(k^{2}_{1}\), \(k^{2}_{2}\), \(k^{3}_{1}\), and \(k^{3}_{2}\). This suggests that a robust mathematical model has been constructed, allowing the proportional relationships of the entire system to be controlled by adjusting the value of \(\gamma_{1,1,1}\). To validate this model, specific code can be executed to check the accuracy of these relationships.

If you would like to see the numerical verification of the above example, please contact us via email at $suweidong @ sdfmu.edu.cn$. We are happy to provide the code.

\begin{theorem}
If a real nonsquare matrix $A \in \mathbb{R}^{m \times n}$ is an individual Volterra-Lyapunov stable matrix (see Definition \ref{def_NSQ_lyp_ind}),
then, there exists a
 block diagonal non-square matrix $K \in \mathbb{C}^{n \times m}$ such that for all non-negative diagonal matrices $E \in  \mathbb{R}^{n \times n}$, and $j \in \mathcal{M}$, where $\mathcal{M}$ is the index set consisting of $k$ tuples of integers in the
range $1,\cdots, m$,
\begin{equation}\label{suf_con_4}
Re\{ \sigma_i ( [A E K]_j ) \} >0,
\end{equation}
where $\text{Re} \{ \sigma_i (M) \}$ represents the real part of the $i$-th eigenvalue of matrix $M$.
\end{theorem}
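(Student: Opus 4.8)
The plan is to reduce the eigenvalue condition \eqref{suf_con_4} to the Volterra--Lyapunov stability of a single square matrix and then to invoke Lemma~\ref{lemma2}. Concretely, fix $K$ to be any matrix of the form \eqref{integral_matrix} with all entries $k_{i,j}>0$. For a given nonnegative diagonal $E$, recall from \eqref{eq_li_comb} that $AEK$ is the $m\times m$ matrix whose $\phi$-th column is $c_\phi=\sum_{j=1}^{p_\phi}\varepsilon_{\phi,j}k_{\phi,j}\,\boldsymbol a_{\phi,j}$. I would first treat the generic configuration in which each group $\phi$ contains at least one index $j$ with $\varepsilon_{\phi,j}>0$; the configurations in which an entire block $\varepsilon_{\phi,\cdot}$ vanishes (a control input removed from service) are dealt with at the end by passing to the square submatrix on the surviving coordinates, over which $\mathcal M$ in \eqref{suf_con_4} is then understood to range.

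The central step is to produce a positive diagonal $\tilde D=\operatorname{diag}(\tilde d_1,\dots,\tilde d_m)$ and positive scalars $\mu_l$ such that
\[
(AEK)\,\tilde D=\sum_l \mu_l\,[A]^m_{s_l}\,D_l ,
\]
where $l$ runs over all selections that choose, from each group $\phi$, an index $\sigma_l(\phi)$ with $\varepsilon_{\phi,\sigma_l(\phi)}>0$, and $D_l>0$ is the individual Lyapunov certificate of the squared matrix $[A]^m_{s_l}$ guaranteed by Definition~\ref{def_NSQ_lyp_ind}. Since the $\phi$-th column of $[A]^m_{s_l}D_l$ equals $(D_l)_{\phi\phi}\,\boldsymbol a_{\phi,\sigma_l(\phi)}$ and the $\phi$-th column of $(AEK)\tilde D$ equals $\tilde d_\phi c_\phi$, it suffices to arrange that the coefficient of each $\boldsymbol a_{\phi,j}$ agrees on both sides, i.e.
\[
\sum_{l:\,\sigma_l(\phi)=j}\mu_l\,(D_l)_{\phi\phi}=\tilde d_\phi\,\varepsilon_{\phi,j}k_{\phi,j},\qquad \text{for all }\phi\text{ and all active }j\text{ in group }\phi.
\]

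This is exactly the setting of the gambling Lemma~\ref{pailiezuhe_Lemma}: the selections $l$ are the betting combinations, $\mu_l=\gamma_l$ are the stakes, the fixed positive numbers $(D_l)_{\phi\phi}$ are the within-combination proportionality factors $\lambda$, and $\sum_{l:\sigma_l(\phi)=j}\mu_l(D_l)_{\phi\phi}$ is the total payoff of card $j$ of group $\phi$. Relabelling cards so that card $1$ of each active group is active and dividing the equations by the $j=1$ case, the requirement becomes that the within-group payoff ratios equal the prescribed positive numbers $\varepsilon_{\phi,j}k_{\phi,j}/(\varepsilon_{\phi,1}k_{\phi,1})$; Lemma~\ref{pailiezuhe_Lemma} supplies positive $\mu_l$ achieving them, after which $\tilde d_\phi$ is read off from the $j=1$ equation. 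Transposing the central identity and adding,
\[
(AEK)\tilde D+\tilde D(AEK)^T=\sum_l \mu_l\bigl([A]^m_{s_l}D_l+D_l([A]^m_{s_l})^T\bigr),
\]
a nonnegative combination of positive definite matrices with at least one term, hence positive definite; so $AEK$ is Volterra--Lyapunov stable with certificate $\tilde D$, and Lemma~\ref{lemma2} yields $\operatorname{Re}\{\sigma_i([AEK]_j)\}>0$ for every $j\in\mathcal M$.

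I expect the difficulty to be organizational rather than conceptual. The delicate points are: (i) setting up the correspondence with Lemma~\ref{pailiezuhe_Lemma} so that the proportionality factors are precisely the diagonal entries of the certificates $D_l$, while tracking which cards and selections are ``active''; and (ii) the degenerate configurations where some $\varepsilon_{\phi,j}=0$, or where a whole block $\varepsilon_{\phi,\cdot}=0$ so that the effective dimension drops below $m$. For the former one restricts $l$ to active selections and matches only the active coefficients; for the latter one works with the lower-order squared matrices $[A]^k_{s_l}$, which inherit Lyapunov certificates from the $m$-order ones because every principal submatrix of a VL-stable matrix is again VL-stable (the relevant principal submatrix of $[A]^m_{s_l}D_l+D_l([A]^m_{s_l})^T$ being positive definite). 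Running the same argument on that submatrix then covers \eqref{suf_con_4} uniformly over $\mathcal M$.
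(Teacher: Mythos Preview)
Your proposal is correct and follows essentially the same route as the paper: write $AEK$ (up to a positive diagonal factor) as a positive combination $\sum_l \mu_l [A]^m_{s_l} D_l$ of the individual certificates, invoke the gambling Lemma~\ref{pailiezuhe_Lemma} to realise the required within-group coefficient ratios, and then conclude via Lemma~\ref{lemma2}. Your use of the auxiliary diagonal $\tilde D$ to absorb the per-group scale and your explicit treatment of the degenerate configurations (inactive cards, and whole blocks $\varepsilon_{\phi,\cdot}=0$ handled by restricting to principal submatrices) are exactly the points the paper needs but leaves somewhat implicit.
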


\begin{proof}
Considering that $A$ is an individual Volterra-Lyapunov stable matrix, the following inequality should be true:
\begin{equation}\label{eq_mdl}
[A]^m_{s_i} D_i + D_i ([A]^m_{s_i})^T  > 0.
\end{equation}
Assume the total number of the $m_{th}$-order squared matrices $[A]^m_{s_i}$ is $N$, i.e., $i \in \{1,2, \cdots, N\}$. Based on inequality (\ref{eq_mdl}), we can conclude, for any $\gamma_i>0$, $i \in \{1, \cdots, N\}$, the following inequality holds
\begin{equation} \label{al_1}
\begin{aligned}
(\gamma_1 [A]^m_{s_1} D^m_{s_1} + \gamma_2 [A]^m_{s_2} D^m_{s_2} \cdots + \gamma_N [A]^m_{s_N} D^m_{s_N}) & +\\
  ( D^m_{s_1}\gamma_1 [A]^m_{s_1} + D^m_{s_2}\gamma_2 [A]^m_{s_2} \cdots + D^m_{s_N} \gamma_N [A]^m_{s_N})^T & >0
\end{aligned}
\end{equation}

Based on Lemma \ref{lemma2}, for any diagonal positive matrix $D > 0$ and $j \in \mathcal{M}$, we have
\begin{equation}\label{eq_important}
Re\{ \sigma  ( [\sum_{i=1}^{N} \gamma_i  [A]^m_{s_i}D ]_j ) \} >0.
\end{equation}

Without loss of generality, we only consider the proof for the $m_{th}$-order matrices. So, we omit the
subscript $j$ in the following discussions.

According to structure of $K$, as before, we redefine the sub-indices of the $n$ columns of $A$ as follows:
$$A=[\boldsymbol{a}_{1,1}, \cdots, \boldsymbol{a}_{1,p_1}, \boldsymbol{a}_{2,1}, \cdots, \boldsymbol{a}_{2,p_2}, \cdots, \cdots, \boldsymbol{a}_{m,1} \cdots,\boldsymbol{a}_{m,p_m}].$$

Correspondingly, we redefine the sub-indices of $\gamma_i>0$, $i \in \{1, \cdots, N\}$ in inequality (\ref{al_1}) as follows;
\begin{equation}
\gamma_{\kappa_1 \kappa_2 \cdots \kappa_m},
\end{equation}
 The coefficient $\gamma_{\kappa_1 \kappa_2 \cdots \kappa_m}$ means from each of the following group of vectors to select one vector to correspond to a term $\gamma_{\kappa_1 \kappa_2 \cdots \kappa_m}[A]^m_{s_i}$ in inequality  (\ref{eq_important}):
\begin{equation}
\begin{aligned}
 Group\,1: & \,\,  \boldsymbol{a}_{1,1}, \cdots, \boldsymbol{a}_{1,p_1},\\
 Group\,2:  & \,\, \boldsymbol{a}_{2,1}, \cdots, \boldsymbol{a}_{2,p_2}, \\
  &\cdots, \cdots, \\
 Group\,m: & \,\, \boldsymbol{a}_{m,1} \cdots,\boldsymbol{a}_{m,p_m}. \\
\end{aligned}
\end{equation}

Further assume $ D=diag\{[d_1,d_2,\cdots, d_m]\}$ and $ D^m_{s_i} = diag\{[\lambda^1_i,\lambda^2_i,\cdots, \lambda^m_i]\}$.  Then the core part of the equation (\ref{eq_important}) can be expressed as follows:
\begin{equation} \label{carefu}
\begin{aligned}
     (\sum_{i=1}^{N} \gamma_i [A]^m_{s_i}  D^m_{s_i})
=[& d_1 ( \sum_{j=1}^{p_1} (\sum_{\kappa_2=1}^{p_2} \sum_{\kappa_3=1}^{p_3} \cdots \sum_{\kappa_m}^{p_m} \gamma_{ j,\kappa_2, \cdots, \kappa_m } \lambda^1_{j,\kappa_2, \cdots, \kappa_m } ) \boldsymbol{a}_{1,j} ), \\
     & d_2 ( \sum_{j=1}^{p_2} (\sum_{\kappa_1=1}^{p_1} \sum_{\kappa_3=1}^{p_3} \cdots \sum_{\kappa_m}^{p_m} \gamma_{ \kappa_1, j, \kappa_3 ,\cdots, \kappa_m } \lambda^2_{\kappa_1, j, \kappa_3 ,\cdots, \kappa_m}) \boldsymbol{a}_{2,j} ), \\
     & \cdots \cdots \cdots \\
      & d_m  ( \sum_{j=1}^{p_m} (\sum_{\kappa_1=1}^{p_1} \sum_{\kappa_2=1}^{p_2} \cdots \sum_{\kappa_{(m-1)}}^{p_{(m-1)}} \gamma_{ \kappa_1, \kappa_2, \cdots,  \kappa_{(m-1)} ,j} \lambda^m_{\kappa_1, \kappa_2, \cdots,  \kappa_{(m-1)} ,j} ) \boldsymbol{a}_{m,j} ) ].\\
\end{aligned}
\end{equation}

On the other hand, the matrix $AEK$ can be expressed as
\begin{equation}\label{eq_li_comb1}
\begin{aligned}
AEK &= [ \sum_{j=1}^{p_1} \varepsilon_{1j} k_{1j} \boldsymbol{a}_{1,j}, \sum_{j=1}^{p_2} \varepsilon_{2j} k_{2j} \boldsymbol{a}_{2,j}, \cdots, \sum_{j=1}^{p_m} \varepsilon_{mj} k_{mj} \boldsymbol{a}_{m,j}]\\
& = [ \sum_{j=1}^{p_1} \tilde {k}_{1j} \boldsymbol{a}_{1,j}, \sum_{j=1}^{p_2} \tilde {k}_{2j} \boldsymbol{a}_{2,j}, \cdots, \sum_{j=1}^{p_m} \tilde {k}_{mj} \boldsymbol{a}_{m,j}]\\
& = [ \tilde {k}_{11} \sum_{j=1}^{p_1} \frac{\tilde {k}_{1j}}{\tilde {k}_{11}}  \boldsymbol{a}_{1,j}, \tilde {k}_{21} \sum_{j=1}^{p_2} \frac{\tilde {k}_{2j}}{\tilde {k}_{21}}  \boldsymbol{a}_{2,j}, \cdots, \tilde {k}_{m1} \sum_{j=1}^{p_m} \frac{\tilde {k}_{mj}}{\tilde {k}_{m1}} \boldsymbol{a}_{m,j}]\\
& = [ \tilde {k}_{11} \sum_{j=1}^{p_1} \bar {k}_{1j} \boldsymbol{a}_{1,j}, \tilde {k}_{21} \sum_{j=1}^{p_2} \bar {k}_{2j} \boldsymbol{a}_{2,j}, \cdots, \tilde {k}_{m1} \sum_{j=1}^{p_m} \bar {k}_{mj} \boldsymbol{a}_{m,j}].\\
\end{aligned}
\end{equation}
In the above equation, $\tilde {k}_{ij} = \varepsilon_{i,j} k_{i,j} \ge 0$, and  $\bar {k}_{ij} = \frac{\tilde {k}_{i,j}}{\tilde {k}_{i,1}}$ (considering the case $\tilde {k}_{i,1} \ne 0$), which implies $\bar {k}_{i1}=1$.

Now, according to Lemma \ref{pailiezuhe_Lemma}, in Equation (\ref{carefu}), if we let $d_i = \tilde{k}_{i1}$ ($i \in \{1,2,\cdots,m\}$), and
 for any given \( k^{\delta}_{\zeta} \) in the \( \delta^{th} \) group and \( \zeta^{th} \) proportion (where \( 1 \leq \delta \leq m \), \( 1 \leq \zeta \leq p_\delta-1 \), and \( \delta \), \( \zeta \) are positive integers), the following equation relationship exists:

\scalebox{0.58}{%

$\frac{\overbrace{\overbrace{\lambda^{\delta}_{\underbrace{\scriptstyle1,1,\ldots,1}_{m}} \gamma_{\underbrace{\scriptstyle1,1,\ldots,1}_{m}} +\ldots+ \lambda^{\delta}_{\underbrace{\scriptstyle1,1,\ldots,1,1}_{\delta},p_{\delta+1},p_{\delta + 2},\ldots, p_{m-1},p_m} \gamma_{\underbrace{\scriptstyle1,1,\ldots,1,1}_{\delta},p_{\delta+1},p_{\delta + 2},\ldots, p_{m-1},p_m} }^{p_{\delta+1}p_{\delta +2}\cdots p_m} + \overbrace{ \ldots+\lambda^{\delta}_{\underbrace{\scriptstyle p_1,\ldots, p_{\delta-1},1,}_{\delta}\underbrace{\scriptstyle p_{\delta+1},\ldots, p_m}_{m-\delta}} \gamma_{\underbrace{\scriptstyle p_1,\ldots, p_{\delta-1},1,}_{\delta}\underbrace{\scriptstyle p_{\delta+1},\ldots, p_m}_{m-\delta}}}^{p_1p_2\cdots p_{\delta-1}p_{\delta+1}\cdots p_m-p_{\delta+1}p_{\delta +2}\cdots p_m}}^{p_1p_2\cdots p_{\delta-1}p_{\delta+1}\cdots p_m}}{\overbrace{\overbrace{\lambda^{\delta}_{\underbrace{\scriptstyle1,1,\ldots, 1,(\zeta+1)}_{\delta}\underbrace{\scriptstyle1,\ldots, 1}_{m-\delta}} \gamma_{\underbrace{\scriptstyle1,1,\ldots, 1,(\zeta+1)}_{\delta}\underbrace{\scriptstyle1,\ldots, 1}_{m-\delta}} +\ldots +\lambda^{\delta}_{\underbrace{\scriptstyle1,1,\ldots, 1,(\zeta+1)}_{\delta}\underbrace{\scriptstyle p_{\delta+1}p_{\delta+2}\ldots p_m}_{m-\delta}} \gamma_{\underbrace{\scriptstyle1,1,\ldots, 1,(\zeta+1)}_{\delta}\underbrace{\scriptstyle p_{\delta+1}p_{\delta+2}\ldots p_m}_{m-\delta}} }^{p_{\delta+1}p_{\delta +2}\cdots p_m}+\overbrace{\ldots +\lambda^{\delta}_{\underbrace{\scriptstyle p_1,\ldots, p_{\delta-1},(\zeta+1),}_{\delta}\underbrace{\scriptstyle p_{\delta+1},\ldots, p_m}_{m-\delta}} \gamma_{\underbrace{\scriptstyle p_1,\ldots, p_{\delta-1},(\zeta+1),}_{\delta}\underbrace{\scriptstyle p_{\delta+1},\ldots, p_m}_{m-\delta}}}^{p_1p_2\cdots p_{\delta-1}p_{\delta+1}\cdots p_m-p_{\delta+1}p_{\delta +2}\cdots p_m}}^{p_1p_2\cdots p_{\delta-1}p_{\delta+1}\cdots p_m}}= k^{\delta}_{\zeta}.$
}

According to the proof of Lemma \ref{pailiezuhe_Lemma}, the product terms of $\lambda$ and $\gamma$ corresponding to the numerator and denominator are proportional to $k^{\delta}_{\zeta}$,
the left-hand side of Equation (\ref{carefu}) equals that of Equation (\ref{eq_li_comb1}). Thus, we can conclude, that for any non-negative diagonal matrices $E \in  \mathbb{R}^{n \times n}$, the inequality (\ref{suf_con_1}) is always satisfied.
\end{proof}

\section{CONCLUSION}
In this study, we consider extended D-stability and Volterra-Lyapunov stability for non-square matrices and establish links between these properties for both square and non-square matrices. The main result is a sufficient condition for D-stability of non-square matrices. Specifically, we confirm that if a non-square matrix is individually Volterra-Lyapunov stable, then it is also D-stable. This condition is rather mild, as it remains an open problem to identify the necessary and sufficient condition for D-stability even for square matrices. 

Furthermore, in some decentralized control or optimization problems, there are different options for selecting the structure of the block diagonal matrix \(K\). Therefore, investigating the selection of the structure of the block diagonal matrix \(K\) to satisfy the D-stable condition would be an interesting direction for further research.

\appendix
\section{Proof of Lemma 2}

\begin{proof}
    
Consider that we have $m$ groups, and let $\phi$ denote the current group index, where $0 < \phi \leq m$. Each group $\phi$ contains $p_\phi$ distinct cards. Our objective is to traverse each card in each group in a non-repeating combination manner. Let $\gamma$ denote a specific way of combination. It is calculated that the total number of such combinations is $\prod_{\phi=1}^m p_\phi$, denoted by $\gamma_{1,1,\ldots,1}$ up to $\gamma_{p_1,p_2,\ldots,p_m}$. Consequently, there are $m \prod_{\phi=1}^m p_\phi$ parameters $\lambda$ obtained through this calculation. Furthermore, the ratio denoted by $k$ has a total of $\sum_{\phi=1}^{m} (p_\phi - 1)$ instances, represented by $k^{1}_{1}$ up to $k^{1}_{p_1}, \ldots, k^{m}_{1}$ up to $k^{m}_{p_m}$.

For the $p_1-1$ ratio relationships from $k^{1}_{1}$ to $k^{1}_{p_1-1}$, $p_1-1$ equations can be established.

\scalebox{0.54}{%

$\frac{\overbrace{\overbrace{\lambda_{\underbrace{\scriptstyle1,1,\ldots,1,1}_{m}}^{1} \gamma_{\underbrace{\scriptstyle1,1,\ldots,1,1}_{m}} + \lambda_{\underbrace{\scriptstyle1,1,\ldots,1,2}_{m}}^{1} \gamma_{\underbrace{\scriptstyle1,1,\ldots,1,2}_{m}} + \ldots + \lambda_{\underbrace{\scriptstyle1,1,\ldots,1,{p_m-1}}_{m}}^{1} \gamma_{\underbrace{\scriptstyle1,1,\ldots,1,{p_m-1}}_{m}} + \lambda_{\underbrace{\scriptstyle1,1,\ldots,1,p_m}_{m}}^{1}\gamma_{\underbrace{\scriptstyle1,1,\ldots,1,p_m}_{m}}}^{p_m} + \lambda_{\underbrace{\scriptstyle1,1,\ldots,2,1}_{m}}^{1} \gamma_{\underbrace{\scriptstyle1,1,\ldots,2,1}_{m}} +\ldots+\lambda_{\underbrace{\scriptstyle1,p_2,\ldots, p_{m-1},p_m}_{m}}^{1} \gamma_{\underbrace{\scriptstyle1,p_2,\ldots, p_{m-1},p_m}_{m}}}^{p_2 p_3 \cdots p_m}}{\overbrace{\overbrace{\lambda_{\underbrace{\scriptstyle2,1,\ldots,1,1}_{m}}^{1}\gamma_{\underbrace{\scriptstyle2,1,\ldots,1,1}_{m}} + \lambda_{\underbrace{\scriptstyle2,1,\ldots,1,2}_{m}}^{1} \gamma_{\underbrace{\scriptstyle2,1,\ldots,1,2}_{m}} + \ldots+\lambda_{\underbrace{\scriptstyle2,1,\ldots,1,{p_m-1}}_{m}}^{1} \gamma_{\underbrace{\scriptstyle2,1,\ldots,1,{p_m-1}}_{m}} + \lambda_{\underbrace{\scriptstyle2,1,\ldots,1,p_m}_{m}}^{1} \gamma_{\underbrace{\scriptstyle2,1,\ldots,1,p_m}_{m}}}^{p_m}+ \lambda_{\underbrace{\scriptstyle2,1,\ldots,1,2,1}_{m}}^{1} \gamma_{\underbrace{\scriptstyle2,1,\ldots,1,2,1}_{m}} + \ldots+\lambda_{\underbrace{\scriptstyle2,p_2,\ldots,p_m}_{m}}^{1} \gamma_{\underbrace{\scriptstyle2,p_2,\ldots,p_m}_{m}}}^{p_2 p_3 \cdots p_m}} = k^{1}_{1}
$}

\scalebox{0.54}{%

$\frac{\overbrace{\overbrace{\lambda_{\underbrace{\scriptstyle1,1,\ldots,1,1}_{m}}^{1} \gamma_{\underbrace{\scriptstyle1,1,\ldots,1,1}_{m}} + \lambda_{\underbrace{\scriptstyle1,1,\ldots,1,2}_{m}}^{1} \gamma_{\underbrace{\scriptstyle1,1,\ldots,1,2}_{m}} + \ldots + \lambda_{\underbrace{\scriptstyle1,1,\ldots,1,{p_m-1}}_{m}}^{1} \gamma_{\underbrace{\scriptstyle1,1,\ldots,1,{p_m-1}}_{m}} + \lambda_{\underbrace{\scriptstyle1,1,\ldots,1,p_m}_{m}}^{1}\gamma_{\underbrace{\scriptstyle1,1,\ldots,1,p_m}_{m}}}^{p_m} + \lambda_{\underbrace{\scriptstyle1,1,\ldots,2,1}_{m}}^{1} \gamma_{\underbrace{\scriptstyle1,1,\ldots,2,1}_{m}} +\ldots+\lambda_{\underbrace{\scriptstyle1,p_2,\ldots, p_{m-1},p_m}_{m}}^{1} \gamma_{\underbrace{\scriptstyle1,p_2,\ldots, p_{m-1},p_m}_{m}}}^{p_2 p_3 \cdots p_m}}{\overbrace{\overbrace{\lambda_{\underbrace{\scriptstyle p_1,1,\ldots, 1,1}_{m}}^{1} \gamma_{\underbrace{\scriptstyle p_1,1,\ldots, 1,1}_{m}} + \ldots + \lambda_{\underbrace{\scriptstyle p_1,1,\ldots,1,p_m}_{m}}^{1} \gamma_{\underbrace{\scriptstyle p_1,1,\ldots,1,p_m}_{m}}}^{p_m}+ \lambda_{\underbrace{\scriptstyle p_1,1,\ldots,1,2,1}_{m}}^{1} \gamma_{\underbrace{\scriptstyle p_1,1,\ldots,1,2,1}_{m}}+\ldots + \lambda_{\underbrace{\scriptstyle p_1p_2\ldots p_m}_{m}}^{1} \gamma_{\underbrace{\scriptstyle p_1p_2\ldots p_m}_{m}}}^{p_2 p_3 \cdots p_m}} = k^{1}_{p_1-1}
$}

These two equations respectively demonstrate the compositional relationships of the ratio $k^{1}_{1}$ between the second element and the first element in the first group, and the compositional relationships of the ratio $k^{1}_{p_1-1}$ between the last element and the first element. Not only are the compositional relationships of these two ratios $k$ presented, but also for each $k$ ratio from $k^{1}_{1}$ to $k^{1}_{p_1-1}$ in this group, the total number of numerators and denominators in the equation

Here, we first deduce the arrangement and distribution of $\lambda$ and $\gamma$ in the $p_1-1$ ratio relationships from $k^{1}_{1}$ to $k^{1}_{p_1-1}$.

For the numerator, the order of arrangement for $\gamma$ remains constant across the $p_1-1$ ratios $k$. For each of these $p_1-1$ ratios $k$, the $\gamma$ called in the product of the first $p_m$ $\lambda$ and $\gamma$ is respectively $\gamma_{1,1,\ldots,1,1}$ up to $\gamma_{1,1,\ldots,1,p_m}$, then the products from $p_m+1$ to $2p_m$ are with $\gamma_{1,1,\ldots,2,1}$ up to $\gamma_{1,1,\ldots,2,p_m}$, and so on, until it can be inferred that the products from $p_2 p_3\cdots p_m-p_m+1$ to $p_2 p_3\cdots p_m$ are with $\gamma_{1,p_2,\ldots, p_{(m-1)},1}$ up to $\gamma_{1,p_2,\ldots, p_{(m-1)},p_m}$. In this way, each formula constituting each $k$ ratio needs to traversing through $p_2p_3\cdots p_m\gamma$.

For the numerator, the order of arrangement for $\lambda$ remains constant across the $p_1-1$ ratios $k$. For each of these $p_1-1$ ratios $k$, the $\gamma$ called in the product of the first $p_m$ $\lambda$ and $\gamma$ is respectively $\lambda_{1,1,\ldots,1,1}^{1}$ up to $\lambda_{1,1,\ldots,1,p_m}^{1}$, then the products from $p_m+1$ to $2p_m$ are with $\lambda_{1,1,\ldots,2,1}^{1}$ up to $\lambda_{1,1,\ldots,2,p_m}^{1}$, and so on, until it can be inferred that the products from $p_2 p_3\cdots p_m-p_m+1$ to $p_2 p_3\cdots p_m$ are with $\lambda_{1,p_2,\ldots, p_{(m-1)},1}^{1}$ up to $\lambda_{1,p_2,\ldots, p_{(m-1)},p_m}^{1}$. In this way, each formula constituting each $k$ ratio needs to traversing through $p_2p_3\cdots p_m\lambda$.

For the denominator, concerning the ratio $k^{1}_{1}$, $\gamma$ is called from $\gamma_{2,1,\ldots,1,1}$ to $\gamma_{2,p_2,\ldots, p_{(m-1)},p_{m}}$; for the ratio $k^{1}_{2}$, $\gamma$ is called from $\gamma_{3,1,\ldots,1,1}$ to $\gamma_{3,p_2,\ldots,p_{(m-1)},p_m}$; for the ratio $k^{ 1}_{p_1-1}$, $\gamma$ is called from $\gamma_{p_1,1,\ldots,1,1}$ to $\gamma_{p_1,p_2,\ldots, p_{(m-1)},p_m}$. For each formula constituting each $k$ ratio, each formula's denominator needs to traverse through $p_2p_3\cdots p_m$ $\gamma$.

For the denominator, concerning the ratio $k^{1}_{1}$, $\lambda$ is called from $\lambda_{2,1,\ldots,1,1}^{1}$ to $\lambda^{1}_{2,p_2,\ldots, p_{(m-1)},p_{m}}$; for the ratio $k^{1}_{2}$, $\lambda$ is called from $\lambda_{3,1,\ldots,1,1}^{1}$ to $\lambda_{3,p_2,\ldots,p_{(m-1)},p_m}^{1}$; for the ratio $k^{ 1}_{p_1-1}$, $\lambda$ is called from $\lambda_{p_1,1,\ldots,1,1}^{1}$ to $\lambda_{p_1,p_2,\ldots, p_{(m-1)},p_m}^{1}$.  For each formula constituting each $k$ ratio, each formula's denominator needs to traverse through $p_2p_3\cdots p_m$ $\lambda$.

For the first $p_1-1$ values of $k$, the same processing method is adopted to solve this problem. For these $p_1-1$ equations, each equation establishes a corresponding proportional relationship. For the first proportion, denoted as $k^{1}_{1}$, both the numerator and the denominator iterate over $p_2p_3\cdots p_m$ products of $\lambda$ and $\gamma$. Therefore, both the numerator and denominator are iterations over $p_2p_3\cdots p_m$ products. Arrange the products of $\gamma$ and constants in the numerator and denominator in ascending order of the subscripts of $\gamma$, with each term corresponding one-to-one. Let
$$
\frac{\lambda_{1,1,\ldots,1,1}^{1} \gamma_{1,1,\ldots,1,1}}{\lambda_{2,1,\ldots,1,1}^{1} \gamma_{2,1,\ldots,1,1}} = k^{1}_{1}, \quad \frac{\lambda_{1,1,\ldots,1,2}^{1} \gamma_{1,1,\ldots,1,2}}{\lambda_{2,1,\ldots,1,2}^{1} \gamma_{2,1,\ldots,1,2}} =  k^{1}_{1}, \ldots, \frac{\lambda_{1,p_1,\ldots, p_m}^{1} \gamma_{1,p_1,\ldots, p_m}}{\lambda_{2,p_2,\ldots ,p_m}^{1} \gamma_{2,p_2,\ldots ,p_m}} =  k^{1}_{1}
$$
for these $p_2 p_3 \cdots p_m $ terms. For the last proportion, which is the $p_1-1^{th}$ one, let

\scalebox{0.88}{%
$\frac{\lambda_{1,1,\ldots,1,1}^{1} \gamma_{1,1,\ldots,1,1}}{\lambda_{p_1,1,\ldots,1,1}^{1} \gamma_{p_1,1,\ldots,1,1}} = k^{1}_{p_1-1}, \frac{\lambda_{1,1,\ldots,1,2}^{1} \gamma_{1,1,\ldots,1,2}}{\lambda_{p_1,1,\ldots,1,2}^{1} \gamma_{p_1,1,\ldots,1,2}} = k^{1}_{p_1-1}, \ldots,\frac{\lambda_{1,p_2\ldots, p_m}^{1} \gamma_{1,p_2\ldots, p_m}}{\lambda_{p_1,p_2,\ldots,p_m}^{1}\gamma_{p_1,p_2,\ldots,p_m}} = k^{1}_{p_1-1}
$}

And for any $\alpha^{th}$ proportion (where $1 \leq \alpha \leq p_1-1$ and $\alpha$ is a positive integer),

\scalebox{0.96}{%
$\frac{\lambda_{1,1,\ldots,1,1}^{1} \gamma_{1,1,\ldots,1,1}}{\lambda_{(\alpha+1),1,\ldots,1,1}^{1} \gamma_{(\alpha+1),1,\ldots,1,1}} = k^{1}_{\alpha }, \frac{\lambda_{1,1,\ldots,1,2}^{1} \gamma_{1,1,\ldots,1,2}}{\lambda_{(\alpha+1),1,\ldots,1,2}^{1} \gamma_{(\alpha+1),1,\ldots,1,2}} = k^{1}_{\alpha }, \ldots, \frac{\lambda_{1,p_2\ldots p_m}^{1} \gamma_{1,p_2\ldots p_m}}{\lambda_{(\alpha+1),p_2\ldots p_m}^{1} \gamma_{(\alpha+1),p_2\ldots p_m}} = k^{1}_{\alpha }
$}

After establishing the proportional relationships of the aforementioned terms, since $\lambda$ and $k$ are known constants, we can construct the proportional relationships between the $\gamma$ terms in the numerator and the $\gamma$ terms in the denominator. This includes the proportions of \(\gamma_{1,1,\ldots,1,1}\) in the numerator to \(\gamma_{2,1,\ldots,1,1}\), \(\gamma_{3,1,\ldots,1,1}\), and so on up to \(\gamma_{p_1,1,\ldots,1,1}\) in the denominator, as well as the proportions of \(\gamma_{1,1,\ldots,1,2}\) in the numerator to \(\gamma_{2,1,\ldots,1,2}\), \(\gamma_{3,1,\ldots,1,2}\), and so on up to \(\gamma_{p_1,1,\ldots,1,2}\) in the denominator, and similarly for other terms up to the proportions of \(\gamma_{1,p_2\ldots p_m}\) in the numerator to \(\gamma_{2,p_2,\ldots, p_m}\), \(\gamma_{3,p_2,\ldots,p_m}\), and so on up to \(\gamma_{p_1,p_2,\ldots p_m}\) in the denominator. In other words, for the first \(p_1-1\) proportional expressions, the \(\gamma\) terms in the denominator can be represented by the corresponding \(\gamma\) terms in the numerator at the same position in the sequence.

For the $p_2-1$ ratio relationships from $k^{2}_{1}$ to $k^{2}_{p_2-1}$, $p_2-1$ equations can be established.

\scalebox{0.66}{%

$\frac{\overbrace{\overbrace{\lambda^{2}_{\underbrace{\scriptstyle1,1,\ldots,1,1}_{m}} \gamma_{\underbrace{\scriptstyle1,1,\ldots,1,1}_{m}} +\ldots+ \lambda^{2}_{\underbrace{\scriptstyle1,1,p_3,\ldots,p_{m-1},p_m}_{m}} \gamma_{\underbrace{\scriptstyle1,1,p_3,\ldots,p_{m-1},p_m}_{m}}}^{p_3 p_4\cdots p_m} +\lambda^{2}_{\underbrace{\scriptstyle2,1,1,\ldots,1,1}_{m}} \gamma_{\underbrace{\scriptstyle2,1,1,\ldots,1,1}_{m}} +  \ldots +\lambda^{2}_{\underbrace{\scriptstyle p_1,1,p_3\ldots p_m}_{m}} \gamma_{\underbrace{\scriptstyle p_1,1,p_3\ldots p_m}_{m}}}^{p_1p_3\cdots p_m}}{\overbrace{\overbrace{\lambda^{2}_{\underbrace{\scriptstyle1,2,1,\ldots,1,1}_{m}} \gamma_{\underbrace{\scriptstyle1,2,1,\ldots,1,1}_{m}} +\ldots+ \lambda^{2}_{\underbrace{\scriptstyle1,2,p_3,\ldots ,p_m}_{m}} \gamma_{\underbrace{\scriptstyle1,2,p_3,\ldots ,p_m}_{m}}}^{p_3p_4\cdots p_m} +  \lambda^{2}_{\underbrace{\scriptstyle2,2,1,\ldots ,1}_{m}} \gamma_{\underbrace{\scriptstyle2,2,1,\ldots ,1}_{m}} +\ldots +\lambda^{2}_{\underbrace{\scriptstyle p_1,2,p_3\ldots p_m}_{m}} \gamma_{\underbrace{\scriptstyle p_1,2,p_3\ldots p_m}_{m}}}^{p_1p_3\cdots p_m}} = k^{2}_{1}
$
}

\scalebox{0.60}{%

$\frac{\overbrace{\overbrace{\lambda^{2}_{\underbrace{\scriptstyle1,1,\ldots,1,1}_{m}} \gamma_{\underbrace{\scriptstyle1,1,\ldots,1,1}_{m}} +\ldots+ \lambda^{2}_{\underbrace{\scriptstyle1,1,p_3,\ldots,p_{m-1},p_m}_{m}} \gamma_{\underbrace{\scriptstyle1,1,p_3,\ldots,p_{m-1},p_m}_{m}}}^{p_3 p_4\cdots p_m} +\lambda^{2}_{\underbrace{\scriptstyle2,1,1,\ldots,1,1}_{m}} \gamma_{\underbrace{\scriptstyle2,1,1,\ldots,1,1}_{m}} +  \ldots +\lambda^{2}_{\underbrace{\scriptstyle p_1,1,p_3\ldots p_m}_{m}} \gamma_{\underbrace{\scriptstyle p_1,1,p_3\ldots p_m}_{m}}}^{p_1p_3\cdots p_m}}{\overbrace{\overbrace{\lambda^{2}_{\underbrace{\scriptstyle1,p_2-1,1,\ldots,1,1}_{m}} \gamma_{\underbrace{\scriptstyle1,p_2-1,1,\ldots,1,1}_{m}} +\ldots+ \lambda^{2}_{\underbrace{\scriptstyle1,p_2-1,p_3,\ldots ,p_m}_{m}}\gamma_{\underbrace{\scriptstyle1,p_2-1,p_3,\ldots ,p_m}_{m}}}^{p_3p_4\cdots p_m} +  \lambda^{2}_{\underbrace{\scriptstyle2,p_2-1,1,\ldots ,1}_{m}} \gamma_{\underbrace{\scriptstyle2,p_2-1,1,\ldots ,1}_{m}} +\ldots +\lambda^{2}_{\underbrace{\scriptstyle p_1,p_2-1,p_3\ldots p_m}_{m}} \gamma_{\underbrace{\scriptstyle p_1,p_2-1,p_3\ldots p_m}_{m}}}^{p_1p_3\cdots p_m}} = k^{2}_{p_2-1}
$
}

These two equations respectively demonstrate the compositional relationships of the ratios \( k^{2}_{1} \) and \( k^{2}_{p_2-1} \) in the second group, specifically between the second and the first elements, and the last and the first elements. Not only these two, but for every \( k \) from \( k^{2}_{1} \) to \( k^{2}_{p_2-1} \) in this group, the number of terms in both the numerator and denominator of each proportional relationship equation is \(p_1p_3\cdots p_m \). Each product of \( \lambda \) and \( \gamma \) in the equations is arranged in sequence as shown in the formula.

Here, we first deduce the arrangement and distribution of \( \lambda \) and \( \gamma \) in the \( p_2-1 \) proportional relationships ranging from \( k^{2}_{1} \) to \( k^{2}_{p_2-1} \). This involves determining the specific sequence in which these variables are ordered within each of the proportional relationships.

For the numerators involving \( \gamma \) in these \( p_2-1 \) proportions denoted as \( k \), the sequence of \( \gamma \)'s remains constant. For each of these \(p_2-1 \) proportions \( k \), the first \(p_m \) products of \( \lambda \) and \( \gamma \) involve \( \gamma \) ranging from \( \gamma_{1,1,\ldots,1,1} \) to \( \gamma_{1,1,\ldots,1,p_m} \). Then, the \( p_m+1 \)st to \( 2p_m \)th products involve \( \gamma \) ranging from \( \gamma_{1,1,\ldots,1,2,1} \) to \( \gamma_{1,1,\ldots,1,2,p_m} \), and so on, until the \( p_3p_4\cdots p_m-p_m+1 \)th to \( p_3p_4\cdots p_m \)th products, which involve \( \gamma \) ranging from \( \gamma_{1,1,p_3,\ldots,p_{m-1},1} \) to \( \gamma_{1,1,p_3,\ldots ,p_{m-1},p_m} \). Notably, the \(p_3p_4\cdots p_m+1  \)st to \( p_3p_4\cdots p_m+p_m \)th products involve \( \gamma \) ranging from \( \gamma_{2,1,1,\ldots,1,1} \) to \( \gamma_{2,1,1,\ldots,1,p_m} \), and this pattern continues up to the last set of products from \( p_1p_3p_4\cdots p_m -p_m+1\)st to \(p_1p_3p_4\cdots p_m \)th, which involve \( \gamma \) ranging from \( \gamma_{p_1,1,p_3,\ldots, p_{m-1},1} \) to \( \gamma_{p_1,1,p_3,\ldots, p_{m-1},p_m} \). Thus, each formula constituting a \( k \) proportion traverses \(p_1p_3\cdots p_m \) \( \gamma \)'s in its numerator.

For the numerator, the order of arrangement for $\lambda$ remains constant across the $p_2-1$ ratios $k$. For each of these $p_2-1$ ratios $k$, the $\gamma$ called in the product of the first $p_m$ $\lambda$ and $\gamma$ is respectively $\lambda^{2}_{1,1,\ldots,1,1}$ up to $\lambda^{2}_{1,1,\ldots,1,p_m}$, then the products from $p_m+1$ to $2p_m$ are with $\lambda^{2}_{1,1,\ldots,1,2,1}$ up to $\lambda^{2}_{1,1,\ldots,1,2,p_m}$, and so on, until it can be inferred that the products from $p_2 p_3\cdots p_m-p_m+1$ to $p_2 p_3\cdots p_m$ are with $\lambda^{2}_{1,1,p_3,\ldots,p_{m-1},1}$ up to $\lambda^{2}_{1,1,p_3,\ldots ,p_{m-1},p_m}$. In this way, each formula constituting each $k$ ratio needs to traversing through $p_1p_3\cdots p_m\lambda$.

For the denominators in the proportions denoted by \( k \), the sequence of \( \gamma \) varies for each proportion. For the proportion \( k^{2}_{1} \), \( \gamma \) is taken from \( \gamma_{1,2,1,\ldots,1,1} \) to \( \gamma_{p_1,2,p_3\ldots p_m} \); for \( k^{2}_{2} \), it is from \( \gamma_{1,3,1\ldots,1,1} \) to \( \gamma_{p_1,3,p_3,\ldots, p_m} \); and for \( k^{2}_{p_2-1} \), \( \gamma \) ranges from \( \gamma_{1,p_2,1,\ldots,1,1} \) to \( \gamma_{p_1,p_2,\ldots,p_m} \). Each formula constituting a \( k \) proportion traverses \(p_1p_3\cdots p_m \) \( \gamma \)'s in its denominator.

For the denominators in the proportions denoted by \( k \), the sequence of \( \lambda \) also varies for each proportion. Specifically, for the proportion \( k^{2}_{1} \), \( \lambda \) ranges from \( \lambda^{2}_{1,2,1,\ldots,1,1} \) to \( \lambda^{2}_{p_1,2,p_3\ldots p_m} \); for \( k^{2}_{2} \), it is from \( \lambda^{2}_{1,3,1\ldots,1,1} \) to \( \lambda^{2}_{p_1,3,p_3,\ldots, p_m} \); and for \( k^{2}_{p_2-1} \), \( \lambda \) ranges from \( \lambda^{2}_{1,p_2,1,\ldots,1,1} \) to \( \lambda^{2}_{p_1,p_2,\ldots,p_m} \). Each formula constituting a \( k \) proportion traverses \( p_1p_3\cdots p_m \) \( \lambda \)'s in its denominator.

Here, for these \( p_2-1 \) proportions \( k \), the same processing method as the previous \( p_1-1 \) \( k \) is adopted to solve the problem. For these \( p_2-1 \) equations, each equation establishes a corresponding proportional relationship. For the first proportion \( k^{2}_{1} \), both the numerator and the denominator iterate over \( p_1p_3\cdots p_m \) products of \( \lambda \) and \( \gamma \), thus each traverses \(p_1p_3\cdots p_m \) products. Arrange the products in the numerator and denominator in ascending order of their subscripts, each corresponding one-to-one. Particularly note the part starting from \( \gamma_{2,1,1,\ldots,1,1} \); based on the reasoning from \( k^{1}_{1} \) to \( k^{1}_{p_1-1} \), \( \gamma_{2,1,1,\ldots,1,1} \) can be expressed using \( \gamma_{1,1,\ldots,1,1} \) and known constants \( \lambda \) and \( k^1 \), i.e., \( \gamma_{2,1,\ldots,1,1} = \frac{\lambda_{1,1,\ldots,1,1}^{1} \gamma_{1,1,\ldots,1,1}}{\lambda_{2,1,\ldots,1,1}^{1} k^{1}_{1}} \) where \( \frac{\lambda_{1,1,\ldots,1,1}^{1}}{\lambda_{2,1,\ldots,1,1}^{1} k^{1}_{1}} \) is a known constant, thus transforming \( \gamma_{2,1,1,\ldots,1,1} \) into a product of \( \gamma_{1,1,\ldots,1,1} \) and a known constant. For \( k^{2}_{1} \), \( \gamma_{2,1,1,\ldots,1,1} \), \( \gamma_{3,1,1,\ldots,1,1} \), ... up to \( \gamma_{p_1,1,1,\ldots,1,1} \) - a total of \( p_1-1 \) \( \gamma \)'s - can all be transformed in the same way into the form of known constants multiplied by \( \gamma_{1,1,\ldots,1,1} \). Multiplying \( \gamma_{1,1,\ldots,1,1} \) with different known constants and summing them can replace \( \gamma_{2,1,\ldots,1,1} \) to \( \gamma_{p_1,1,\ldots,1,1} \), these \( p_1-1 \) \( \gamma \)'s. Thus, it can be deduced that \( \gamma_{2,1,1,\ldots,1,1} \) to \( \gamma_{p_1,1,p_3,\ldots, p_m} \), a total of \( p_1p_3\cdots p_m - p_3p_4\cdots p_m \) \( \gamma \)'s, can be represented by \( \gamma_{1,1,\ldots,1,1} \) to \( \gamma_{1,1,p_3,\ldots ,p_m} \), a total of \( p_3p_4 \cdots p_m \) \( \gamma \)'s. By substituting \( \gamma_{2,1,1,\ldots,1,1} \) to \( \gamma_{p_1,1,p_3,\ldots,p_m} \) into the mixed products of \( \gamma_{1,1,\ldots,1,1} \) to \( \gamma_{1,1,p_3\ldots p_m} \) with constants \( \lambda \) and \( k \), the formula can be consolidated into a format that only contains known constants formed by multiplying or dividing \( \lambda \) and \( k \), which are then multiplied and added with \( \gamma_{1,1,\ldots,1,1} \) to \( \gamma_{1,1,p_3,\ldots, p_m} \), a total of \( p_3p_4\cdots p_m \) \( \gamma \)'s.

\[
\overbrace{\overbrace {\gamma_{\underbrace{\scriptstyle1,1,\ldots,1,1}_{m}}\ldots\gamma_{\underbrace{\scriptstyle1,1,p_3,\ldots p_m}_{m}}}^{p_3p_4\cdots p_m}\overbrace {\gamma_{\underbrace{\scriptstyle2,1,\ldots,1,1}_{m}}\ldots\gamma_{\underbrace{\scriptstyle2,1,p_3,\ldots, p_m}_{m}}}^{p_3p_4\cdots p_m}\ldots\overbrace {\gamma_{\underbrace{\scriptstyle p_1,1,\ldots,1,1}_{m}}\ldots\gamma_{\underbrace{\scriptstyle p_1,1,p_3,\ldots, p_m}_{m}}}^{p_3p_4\cdots p_m}}^{p_1p_3p_4\cdots p_m}
\]

For the composition of \( \gamma \) in the numerator of \( k^{2}_{1} \), the arrangement can be divided into \( p_1 \) groups, each of size \(p_3p_4\cdots p_m\). The \( \gamma \)'s in the second to the \(p_2 \)th groups can be transformed according to the equations of \( k^{1}_{1} \) to \( k^{1}_{p_1-1} \). Specifically, these \( \gamma \)'s can be converted into a product of corresponding \( \gamma \)'s from the first group and known constants.

Therefore, by analogy, for the denominator of \( k^{2}_{1} \), the terms from \( \gamma_{1,2,\ldots,1,1} \) to \( \gamma_{p_1,2,p_3,\ldots ,p_m} \) can be transformed into a format where \( p_3p_4 \cdots p_m \) specific \( \gamma \)'s, namely from \( \gamma_{1,2,1,\ldots,1,1} \) to \( \gamma_{1,2,p_3,\ldots, p_m} \), are multiplied by known constants and then summed up. This transformation leverages the known proportional relationships and constants to simplify the original expression.

After integration, the equation for \( k^{2}_{1} \) can be transformed into a format where the numerator is the sum of \(p_3p_4 \cdots p_m  \) specific \( \gamma \)'s, from \( \gamma_{1,1,\ldots,1,1} \) to \( \gamma_{1,1,p_3,\ldots, p_m} \), each multiplied by known constants, and the denominator is similarly the sum of \( p_3p_4 \cdots p_m \) specific \( \gamma \)'s, from \( \gamma_{1,2,1,\ldots,1,1} \) to \( \gamma_{1,2,p_3,\ldots, p_m} \), each multiplied by known constants. This transformation simplifies the original proportional relationship into a more manageable form.

Here, we can organize the coefficient of \( \gamma_{1,1,\ldots,1,1} \) to demonstrate that in the reorganized formula, the known constant formed by multiplying and dividing \( k \) and \( \lambda \), then adding them together, is positive. At this point, the coefficient is given by \( \lambda_{1,1,\ldots,1,1}^{2}+\frac{\lambda_{2,1,\ldots,1,1}^{2}\lambda_{1,1,\ldots,1,1}^{1}}{\lambda_{2,1,\ldots,1,1}^{1} k^{1}_{1}} + \frac{\lambda_{3,1,\ldots,1,1}^{2}\lambda_{1,1,\ldots,1,1}^{1}}{\lambda_{3,1,\ldots,1,1}^{1} k^{1}_{1}} + \ldots + \frac{\lambda_{p_1,1,\ldots,1,1}^{2}\lambda_{1,1,\ldots,1,1}^{1}}{\lambda_{p_1,1,\ldots,1,1}^{1}k^{1}_{1}} \),and it is positive.

Similarly, it can be deduced that the equation for \( k^{2}_{2} \) transforms into a format where the numerator consists of the sum of \( p_3 p_4 \cdots p_m\) specific \( \gamma \)'s, from \( \gamma_{1,1,\ldots,1,1} \) to \( \gamma_{1,1,p_3,\ldots, p_m} \), each multiplied by known constants. The denominator, in parallel, is the sum of \(  p_3p_4 \cdots p_m \) specific \( \gamma \)'s, from \( \gamma_{1,3,1,\ldots,1,1} \) to \( \gamma_{1,3,p_3,\ldots ,p_m} \), each multiplied by known positive constants. This approach simplifies the proportional relationship into a more straightforward and manageable format.

This process can be continued in a similar fashion, leading up to the equation for \( k^{2}_{p_2-1} \). For \( k^{2}_{p_2-1} \), the equation can be transformed into a format where the numerator is the sum of \(  p_3p_4 \cdots p_m \) specific \( \gamma \)'s, ranging from \( \gamma_{1,1,\ldots,1,1} \) to \( \gamma_{1,1,p_3,\ldots p_m} \), each multiplied by known positive constants. Similarly, the denominator is the sum of \( p_3p_4 \cdots p_m \) specific \( \gamma \)'s, ranging from \( \gamma_{1,p_2,1,\ldots,1,1} \) to \( \gamma_{1,p_2,p_3,\ldots, p_m} \), each multiplied by known positive constants. This sequence of transformations simplifies the original complex proportional relationships.

At this stage, for the equations from \( k^{2}_{1} \) to \( k^{2}_{p_2-1} \), both the numerator and denominator have been transformed into a format where \( p_3p_4 \cdots p_m \) specific \( \gamma \)'s are multiplied by known positive constants and then summed up. Furthermore, in the final organized formula, it is established that the coefficients of all \( \gamma \) terms are positive. Now, we repeat the process used previously for \( k^{1}_{1} \) to \( k^{1}_{p_1-1} \). This involves arranging the products of \( \gamma \) and constants in the numerator and denominator in ascending order of the \( \gamma \) subscripts, ensuring each term corresponds one-to-one. Then, extract the first product from both the numerator and denominator, and set the result of the numerator divided by the denominator as the current proportion \( k \). Repeat this operation for the second to the \( p_3p_4 \cdots p_m \)th products.

Taking the example of representing \( \gamma_{1,2,1,\ldots,1,1} \) in terms of \( \gamma_{1,1,\ldots,1,1} \) in \( k^{2}_{1} \), from previous reasoning, we know that the product term containing \( \gamma_{1,1,\ldots,1,1} \) in the numerator can be converted into a known positive constant multiplied by \( \gamma_{1,1,\ldots,1,1} \), and the product term containing \( \gamma_{1,2,1,\ldots,1,1} \) in the denominator can be converted into another known positive constant multiplied by \( \gamma_{1,2,1,\ldots,1,1} \). Therefore, \( \gamma_{1,1,\ldots,1,1} \) multiplied by a known positive constant divided by \( \gamma_{1,2,1,\ldots,1,1} \) multiplied by another known positive constant equals \( k^{2}_{1} \). This implies that \( \gamma_{1,2,1,\ldots,1,1} \) can be represented as \( \gamma_{1,1,\ldots,1,1} \) multiplied by a new known positive constant, which is a combination of the original known positive constants and \( k \).

From the case of \( k^{2}_{1} \), it can be deduced that \( \gamma_{1,1,\ldots,1,1} \) multiplied by a known positive constant can represent \( \gamma_{1,2,1,\ldots,1,1} \), and similarly, \( \gamma_{1,1,\ldots,1,2} \) multiplied by a known positive constant can represent \( \gamma_{1,2,1,\ldots,1,2} \), and this reasoning can be extended up to \( \gamma_{1,1,p_3,\ldots, p_m} \) multiplied by a known positive constant representing \( \gamma_{1,2,p_3,\ldots, p_m} \). Similarly, in the cases of \( k^{2}_{2} \) to \( k^{2}_{p_2-1} \), the same substitution can be made where \( \gamma_{1,1,\ldots,1,1} \) to \( \gamma_{1,1,p_3,p_4,\ldots, p_m} \) replace \( \gamma_{1,3,1,\ldots,1,1} \) to \( \gamma_{1,3,p_3,p_4,\ldots,p_m } \) all the way up to \( \gamma_{1,p_2,1,\ldots,1,1} \) to \( \gamma_{1,p_2,p_3,\ldots, p_m} \).

At this point, we can represent all other \( \gamma \)'s using a total of \( p_3p_4 \cdots p_m \) numbers, ranging from \( \gamma_{1,1,\ldots,1,1} \) to \( \gamma_{\scriptstyle1,1,p_3,p_4,\ldots, p_m} \), by following the above strategy. In the cases of \( k^{3}_{1} \) to \( k^{3}_{p_3-1} \), all other \( \gamma \)'s can be represented using \( p_4p_5\cdots p_6 \) numbers, from \( \gamma_{1,1,\ldots,1,1} \) to \( \gamma_{1,1,1,p_4,p_5,\ldots p_m} \). Similarly, this process can be extrapolated to the cases of \( k^{m}_{1} \) to \( k^{m}_{p_m-1} \), where all \( \gamma \)'s can be represented by a single number, \( \gamma_{1,1,\ldots,1,1} \). For any given \( \delta^{th} \) group's \( k \) (where \( 1 \leq \delta \leq m \), and \( \delta \) is a positive integer), in \( k^{\delta}_{1} \) to \( k^{\delta}_{m-1} \), all other \( \gamma \) terms can be represented by a sequence from \( \gamma_{1,1,\ldots,1,1} \) to \( \gamma_{\underbrace{\scriptstyle1,1,\ldots,1,1}_{\delta},p_{\delta+1},p_{\delta + 2},\ldots, p_{m-1},p_m} \).Furthermore, in the final organized formula, it is established that the coefficients of all \( \gamma \) terms are positive.

Based on the previously discussed reasoning and recursive processing relationships, we can generalize the rule for the arrangement of \( \lambda \) and \( \gamma \) in the numerator and denominator of any proportion \( k \) and its recursive processing. For any given \( k^{\delta}_{\zeta} \) in the \( \delta^{th} \) group and \( \zeta^{th} \) proportion (where \( 1 \leq \delta \leq m \), \( 1 \leq \zeta \leq p_\delta-1 \), and \( \delta \), \( \zeta \) are positive integers), the following equation relationship exists:

\scalebox{0.58}{%

$\frac{\overbrace{\overbrace{\lambda^{\delta}_{\underbrace{\scriptstyle1,1,\ldots,1}_{m}} \gamma_{\underbrace{\scriptstyle1,1,\ldots,1}_{m}} +\ldots+ \lambda^{\delta}_{\underbrace{\scriptstyle1,1,\ldots,1,1}_{\delta},p_{\delta+1},p_{\delta + 2},\ldots, p_{m-1},p_m} \gamma_{\underbrace{\scriptstyle1,1,\ldots,1,1}_{\delta},p_{\delta+1},p_{\delta + 2},\ldots, p_{m-1},p_m} }^{p_{\delta+1}p_{\delta +2}\cdots p_m} + \overbrace{ \ldots+\lambda^{\delta}_{\underbrace{\scriptstyle p_1,\ldots, p_{\delta-1},1,}_{\delta}\underbrace{\scriptstyle p_{\delta+1},\ldots, p_m}_{m-\delta}} \gamma_{\underbrace{\scriptstyle p_1,\ldots, p_{\delta-1},1,}_{\delta}\underbrace{\scriptstyle p_{\delta+1},\ldots, p_m}_{m-\delta}}}^{p_1p_2\cdots p_{\delta-1}p_{\delta+1}\cdots p_m-p_{\delta+1}p_{\delta +2}\cdots p_m}}^{p_1p_2\cdots p_{\delta-1}p_{\delta+1}\cdots p_m}}{\overbrace{\overbrace{\lambda^{\delta}_{\underbrace{\scriptstyle1,1,\ldots, 1,(\zeta+1)}_{\delta}\underbrace{\scriptstyle1,\ldots, 1}_{m-\delta}} \gamma_{\underbrace{\scriptstyle1,1,\ldots, 1,(\zeta+1)}_{\delta}\underbrace{\scriptstyle1,\ldots, 1}_{m-\delta}} +\ldots +\lambda^{\delta}_{\underbrace{\scriptstyle1,1,\ldots, 1,(\zeta+1)}_{\delta}\underbrace{\scriptstyle p_{\delta+1}p_{\delta+2}\ldots p_m}_{m-\delta}} \gamma_{\underbrace{\scriptstyle1,1,\ldots, 1,(\zeta+1)}_{\delta}\underbrace{\scriptstyle p_{\delta+1}p_{\delta+2}\ldots p_m}_{m-\delta}} }^{p_{\delta+1}p_{\delta +2}\cdots p_m}+\overbrace{\ldots +\lambda^{\delta}_{\underbrace{\scriptstyle p_1,\ldots, p_{\delta-1},(\zeta+1),}_{\delta}\underbrace{\scriptstyle p_{\delta+1},\ldots, p_m}_{m-\delta}} \gamma_{\underbrace{\scriptstyle p_1,\ldots, p_{\delta-1},(\zeta+1),}_{\delta}\underbrace{\scriptstyle p_{\delta+1},\ldots, p_m}_{m-\delta}}}^{p_1p_2\cdots p_{\delta-1}p_{\delta+1}\cdots p_m-p_{\delta+1}p_{\delta +2}\cdots p_m}}^{p_1p_2\cdots p_{\delta-1}p_{\delta+1}\cdots p_m}}= k^{\delta}_{\zeta}$
}

Based on the general law of equation relationships, we can summarize any step representing the recursive relationship. According to the reasoning in the previous text, in the numerator and denominator of \(k\) in the \(\delta^{th}\) group, we can represent the remaining \(\gamma\)s by multiplying the known constants formed by \(\lambda\) and \(k\) with a sequence from \(\gamma_{1,1,\ldots,1,1}\) to \(\gamma_{\underbrace{\scriptstyle1,1,\ldots,1,1}_{\delta},p_{\delta+1},p_{\delta+2},\ldots, p_m}\). That is, using the first \(p_{\delta+1}p_{\delta+2}\cdots p_m \gamma\)s in the formula enclosed in brackets to represent the remaining \(p_1p_2\cdots p_{\delta-1}p_{\delta+1}\cdots p_m-p_{\delta+1}p_{\delta+2}\cdots p_m \) \(\gamma\)s. Thus, both the numerator and denominator of the formula can be simplified to \(p_{\delta+1}p_{\delta+2}\cdots p_m \) terms of the product of \(\gamma_{1,1,\ldots,1,1}\) to \(\gamma_{\underbrace{\scriptstyle1,1,\ldots,1,1}_{\delta},p_{\delta+1},p_{\delta+2},\ldots, p_m}\)  with known constants. As deduced previously, the coefficients of \(\gamma\) are all positive. Assuming the rearranged coefficients are \(\iota\), the numerator is arranged from \(\iota_1\) to \(\iota_{p_{\delta+1}p_{\delta+2}\cdots p_m }\), and the denominator from \(\iota_{p_{\delta+1}p_{\delta+2}\cdots p_m +1}\) to \(\iota_{2p_{\delta+1}p_{\delta+2}\cdots p_m }\).

\scalebox{0.72}{%
$\frac{\overbrace{  \iota_{1} \gamma_{\underbrace{\scriptstyle1,1,\ldots,1,1}_{m}} +\iota_{2} \gamma_{\underbrace{\scriptstyle1,1,\ldots,1,2}_{m}} +\ldots+\iota_{p_{\delta+1}p_{\delta+2}\cdots (p_m -1)} \gamma_{\underbrace{\scriptstyle1,1,\ldots,1}_{\delta}\underbrace{\scriptstyle,p_{\delta+1},p_{\delta+2},\ldots, p_m}_{m-\delta}}+\iota_{p_{\delta+1}p_{\delta+2}\cdots p_m } \gamma_{\underbrace{\scriptstyle1,1,\ldots,1}_{\delta}\underbrace{\scriptstyle,p_{\delta+1},p_{\delta+2},\ldots,p_m}_{m-\delta}}}^{p_{\delta+1}p_{\delta+2}\cdots p_m}}{\overbrace{  \iota_{p_{\delta+1}p_{\delta+2}\cdots p_m+1} \gamma_{\underbrace{\scriptstyle1,1,\ldots, 1,(\zeta+1)}_{\delta}\underbrace{\scriptstyle,1,\ldots, 1}_{m-\delta}} +\iota_{p_{\delta+1}p_{\delta+2}\cdots p_m+2} \gamma_{\underbrace{\scriptstyle1,1,\ldots ,1,(\zeta+1)}_{\delta}\underbrace{\scriptstyle,1,\ldots ,1,2}_{m-\delta}} +\ldots+\iota_{2p_{\delta+1}p_{\delta+2}\cdots p_m} \gamma_{\underbrace{\scriptstyle1,1,\ldots, 1,(\zeta+1)}_{\delta}\underbrace{\scriptstyle,p_{\delta+1},p_{\delta+2},\ldots, p_m}_{m-\delta}}}^{p_{\delta+1}p_{\delta+2}\cdots p_m}} = k^{\delta}_{\zeta}
$}

Continuing with the arrangement of product terms in the numerator and denominator according to the ascending order of \(\gamma\) subscripts, each term is made to correspond one-to-one. Once again, set the ratio of corresponding product terms in the numerator and denominator equal to the current \(k\).
\scalebox{1}{%
$
\frac{ \iota_{1} \gamma_{\underbrace{\scriptstyle1,1,\ldots,1,1}_{m}} }{\iota_{p_{\delta+1}p_{\delta+2}\cdots p_m+1} \gamma_{\underbrace{\scriptstyle1,1,\ldots, 1,(\zeta+1)}_{\delta}\underbrace{\scriptstyle,1,\ldots, 1}_{m-\delta}}} = k^{\delta}_{\zeta}\ldots\ldots\frac{ \iota_{p_{\delta+1}p_{\delta+2}\cdots p_m } \gamma_{\underbrace{\scriptstyle1,1,\ldots,1}_{\delta}\underbrace{\scriptstyle,p_{\delta+1},p_{\delta+2},\ldots,p_m}_{m-\delta}} }{\iota_{2p_{\delta+1}p_{\delta+2}\cdots p_m} \gamma_{\underbrace{\scriptstyle1,1,\ldots, 1,(\zeta+1)}_{\delta}\underbrace{\scriptstyle,p_{\delta+1},p_{\delta+2},\ldots, p_m}_{m-\delta}}} = k^{\delta}_{\zeta}
$}

At this stage, the product of \(\gamma_{1,1,\ldots,1,1}\) to \(\gamma_{\underbrace{\scriptstyle1,1,\ldots,1}_{\delta},p_{\delta+1},\ldots, p_m}\)  with known constant terms can represent \(\gamma_{1,1,\ldots,1,(\zeta+1),\underbrace{\scriptstyle1,\ldots,1}_{m-\delta}}\)  to \(\gamma_{1,1,\ldots,1,(\zeta+1),p_{\delta+1},\ldots ,p_m}\) .

\end{proof}

\bibliography{VO2.bib}

\end{document}